\numberwithin{equation}{section}
\newtheorem{theorem}{Theorem}[section]
\newtheorem{proposition}[theorem]{Proposition}
\newtheorem{lemma}[theorem]{Lemma}
\newtheorem{corollary}[theorem]{Corollary}
\newtheorem*{theorem*}{Theorem}
\newtheorem*{claim*}{Claim}
\newtheorem*{proposition*}{Proposition}
\newtheorem*{lemma*}{Lemma}
\newtheorem*{corollary*}{Corollary}
\newtheorem{theoremA}{Theorem}
\newtheorem{corA}[theoremA]{Corollary}
\theoremstyle{definition}
\newtheorem{definition}[theorem]{Definition}
\newtheorem{observation}[theorem]{Observation}
\newtheorem{remark}[theorem]{Remark}
\newtheorem{question}[theorem]{Question}
\newtheorem*{definition*}{Definition}
\newtheorem*{observation*}{Observation}
\newtheorem*{remark*}{Remark}
\newtheorem*{example*}{Example}
\newtheorem*{question*}{Question}
\newtheorem*{exercise*}{Exercise}
\newtheorem*{fact*}{Fact}
\newtheorem*{notation*}{Notation}
\newcommand{\bbN}{\mathbb{N}}
\newcommand{\bbQ}{\mathbb{Q}}
\newcommand{\bbR}{\mathbb{R}}
\newcommand{\bbZ}{\mathbb{Z}}
\newcommand{\calG}{\mathcal{G}}
\newcommand{\vE}{\vec{E}}
\newcommand{\DG}{\mathbf{d}_\calG}
\newcommand{\DD}{\mathbf{d}}
\newcommand{\restrict}{\upharpoonright}
\newcommand{\epi}{\twoheadrightarrow}
\newcommand{\normalin}{\lhd}
\newcommand{\ii}{^{-1}}
\newcommand{\gen}[1]{\left< #1 \right>}
\newcommand{\piG}{\pi_1(\calG,T)}
\newcommand{\barpiG}{\overline{\pi}_1(\calG,T)}
\DeclareMathOperator{\id}{id}
\DeclareMathOperator{\Span}{Span}
\DeclareMathOperator{\Trans}{Trans}
\DeclareMathOperator{\Acts}{Acts}
\DeclareMathOperator{\Sym}{Sym}
\title{Virtually free groups are stable in permutations}
\author{Nir Lazarovich}
\thanks{NL is supported by the Israel Science Foundation (grant No. 1562/19) and the German-Israeli Foundation for Scientific Research and Development.}
\author{Arie Levit}
\date{}
\begin{document}

\maketitle

\begin{abstract}
We prove that finitely generated virtually free groups are stable in permutations. As an application,  we show that  almost-periodic almost-automorphisms of labelled graphs are close to periodic automorphisms.
\end{abstract}

\section{Introduction}


A finitely generated group $G$ is called stable in permutations (in short P-stable) if every almost action of $G$ on a finite set is close to an honest action (see \S\ref{sec: relative stability} for definitions).  For the ubiquitous class of sofic groups, the property of P-stability can be seen as a stronger form of residual finiteness \cite{glebsky2009almost}. 
    Our main result is:


\begin{theoremA}\label{thm: main thm}
Every finitely generated virtually free group is P-stable.
\end{theoremA}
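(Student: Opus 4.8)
The plan is to use the description of a finitely generated virtually free group as the fundamental group of a finite graph of finite groups, and then to prove P-stability by correcting an arbitrary almost-action via a local-to-global surgery carried out along the associated Bass--Serre tree.

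\emph{Set-up.} By Karrass--Pietrowski--Solitar, every finitely generated virtually free group is isomorphic to $\piG$ for some finite graph of groups $\calG$, with spanning tree $T$, all of whose vertex and edge groups are finite. I would work with the standard presentation of $\piG$ attached to this data: a finite generating set given by generators of all (finite) vertex groups together with one stable letter $t_e$ for each edge $e\notin T$, and relations of three kinds --- the vertex-group relations, the edge relations identifying the two embeddings of each edge group in its endpoints, and the HNN relations $t_e c t_e\ii=\bar{c}$ for $e\notin T$. The proof then amounts to the permanence statement that $\piG$ is P-stable whenever all its vertex groups are; since finite groups are trivially P-stable and a finite graph of finite groups is assembled by iterated amalgamation and HNN extension over \emph{finite} edge groups, this yields Theorem~A. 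It is natural to organize the argument through the relative form of P-stability (stability of a pair $H\le G$), so that one may induct: $A*_C B$ and $A*_C$ remain P-stable whenever the edge group $C$ is finite.

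\emph{The correction.} Let $\phi\colon \piG\to\Sym(n)$ be an $\epsilon$-almost-action. First restrict $\phi$ to each vertex group; this is an $O(\epsilon)$-almost-action of a finite group, and finite groups are P-stable in the strong, explicit sense that one may re-group the points into genuine orbits after moving only a small fraction of them. Thus, moving an $O(\epsilon)$-fraction of $[n]$, we obtain an almost-action $\phi'$ whose restriction to every vertex group is honest; then the vertex and edge relations hold exactly, and only the HNN relations are violated, on an $O(\epsilon)$-fraction of points --- equivalently, the permutations implementing the $t_e$ fail to intertwine the two finite edge-group actions only on a small set. Now I would call $x\in[n]$ \emph{good} if its ball of some fixed bounded radius in the Schreier graph $\Sch(\phi')$ is isomorphic to the corresponding ball in the Bass--Serre coset tree of $\piG$; the almost-action inequalities force the bad set $B$ to have size $O(\epsilon n)$. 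Off $B$ the Schreier graph is a disjoint union of finite pieces glued along edge-group cosets in a tree-like pattern, i.e. honest fragments of Schreier graphs of $\piG$; and --- crucially --- because that gluing pattern is a \emph{tree} of finite pieces, these fragments can be completed: re-route the $t_e$-edges to make them exactly equivariant, complete partial matchings of edge-group cosets to full ones, and absorb $B$ together with any residual points into an arbitrary honest action of $\piG$ on $O(\epsilon n)$ points (such finite actions exist since $\piG$ is residually finite). The outcome is an honest action of $\piG$ at Hamming distance $O(\epsilon)$ from $\phi$, which is precisely P-stability.

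\emph{Main obstacle.} The heart of the matter is the global consistency of this surgery: one must ensure that the repairs performed near the various out-of-tree edges and near distinct bad points do not conflict, so that a \emph{bounded} amount of repair --- of order $\epsilon n$ --- suffices. This is exactly where finiteness of the edge groups of $\calG$, equivalently the existence of a finite-index free subgroup so that $\piG$ acts on a tree, is used decisively: on a tree a locally consistent system of partial matchings always completes to a global one, with no long-range obstruction, whereas for splittings with essential cycles (surface groups being the paradigmatic case) the analogous completion can genuinely fail. A secondary, purely technical, burden is the quantitative bookkeeping relating $\epsilon$ to the two relevant distances $\DD$ and $\DG$ at each stage, and --- if one first proves the flexible version, where the corrected action is allowed to live on a set of size $(1+O(\epsilon))n$ --- the subsequent upgrade to strict P-stability.
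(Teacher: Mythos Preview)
Your high-level setup matches the paper's --- write the group as $\piG$ for a finite graph of finite groups --- but the mechanism you propose for the correction is genuinely different, and in its present form has a real gap. The paper does \emph{not} argue by geometric surgery along the Bass--Serre tree. Instead it encodes the restriction of an almost-action to each vertex group as a vector of orbit-type multiplicities in a free $\bbZ$-module $\Lambda_V$, and observes that factoring through $\piG$ forces this vector into the kernel of an explicit $\bbZ$-linear map $\DG:\Lambda_V\to\Lambda_E$ (recording, for each edge $e$, the difference of the two induced $G_e$-orbit-type vectors). A $\delta$-almost action satisfies $\|\DG\rho^\sharp\|\prec\delta\|\rho^\sharp\|$, and the heart of the proof is a purely linear-algebraic lemma: any lattice point in the positive cone with small $\DG$-image is $O(\delta)$-close to a lattice point in $\ker\DG\cap\Lambda_V^+$ of no larger norm. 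Once such a target vector $\lambda'$ is in hand, building a nearby honest action with $(\rho')^\sharp=\lambda'$ is routine (and the strict-vs-flexible issue is handled by padding $\lambda'$ with copies of the trivial action on a singleton to restore $\|\lambda'\|=|X|$).

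The gap in your sketch sits exactly at the phrase ``complete partial matchings of edge-group cosets to full ones.'' For the edge relation at $e$ to hold, the two restricted $G_e$-actions must be \emph{isomorphic} --- their orbit-type multiplicity vectors must agree on the nose, not merely approximately. After you correct each vertex group separately these vectors are close but unequal, and equalizing them across all edges simultaneously is a coupled integer-programming problem. Your appeal to tree structure is misleading: the object carrying the obstruction is the finite graph $\Gamma$, which has cycles (the HNN edges), not the Bass--Serre tree; even along the spanning tree one must check that propagated corrections stay $O(\delta)$, and at the non-tree edges one must solve the numerical matching exactly. This is precisely what the paper's integer-linear-algebra lemma does, via an induction on faces of the cone and the fact that $\DG$ is defined over $\bbQ$; it is not obvious and your proposal supplies no substitute. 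A secondary issue: your proposed induction (``$A*_C B$ stays P-stable when $C$ is finite and $A,B$ are P-stable'') is stronger than what is needed or proved --- the intermediate groups in such an induction are already infinite --- and is not known in that generality.
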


It is trivially true that free groups are P-stable. 
But while residual finiteness is   preserved under passing to finite index subgroups (or rather to any subgroup), this fact is not clear in general for P-stability. 

To the best of our knowledge Theorem \ref{thm: main thm} gives the first examples of P-stable groups which are not free products of P-stable amenable groups. Note that while fundamental groups of closed orientable surfaces are known to be flexibly P-stable  \cite{lazarovich2019surface}, it is not clear if these groups are  P-stable in the strict sense.


As a special case of Theorem \ref{thm: main thm} we answer  the following question of  Lubotzky.

\begin{corollary}
The modular group $\mathrm{SL}_2(\bbZ)$ is P-stable.
\end{corollary}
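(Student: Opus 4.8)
The plan is to observe that $\mathrm{SL}_2(\bbZ)$ is a finitely generated virtually free group, so that the corollary becomes an immediate consequence of Theorem \ref{thm: main thm}; the only task is to recall why these two properties hold. Finite generation is classical: $\mathrm{SL}_2(\bbZ)$ is generated by $S = \m{0 & -1 \\ 1 & 0}$ and $T = \m{1 & 1 \\ 0 & 1}$. For virtual freeness I would start from the well-known amalgam decomposition
\[
\mathrm{SL}_2(\bbZ) \;\cong\; \bbZ/4\bbZ \ast_{\bbZ/2\bbZ} \bbZ/6\bbZ,
\]
equivalently from $\mathrm{PSL}_2(\bbZ) \cong \bbZ/2\bbZ \ast \bbZ/3\bbZ$ together with the central extension $1 \to \{\pm I\} \to \mathrm{SL}_2(\bbZ) \to \mathrm{PSL}_2(\bbZ) \to 1$.

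From the amalgam decomposition, $\mathrm{SL}_2(\bbZ)$ acts on the associated Bass--Serre tree with all vertex and edge stabilizers finite (of orders $4$, $6$ and $2$); by the Karrass--Pietrowski--Solitar theorem a finitely generated group acting on a tree with finite stabilizers is virtually free, so $\mathrm{SL}_2(\bbZ)$ is virtually free. If one prefers an explicit witness, the principal congruence subgroup $\Gamma(2) \le \mathrm{SL}_2(\bbZ)$ has index $6$ and is a central extension of the free group $\overline{\Gamma}(2) \cong F_2$ (the fundamental group of the thrice-punctured sphere) by $\{\pm I\}$; since $H^2(F_2;\bbZ/2\bbZ) = 0$ this extension splits, so $\Gamma(2) \cong \bbZ/2\bbZ \times F_2$ contains a free subgroup of index $2$, and hence $\mathrm{SL}_2(\bbZ)$ contains a free subgroup of index $12$.

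With finite generation and virtual freeness in hand, applying Theorem \ref{thm: main thm} to $G = \mathrm{SL}_2(\bbZ)$ yields P-stability and completes the proof. I do not anticipate any real obstacle here: all the analytic substance — the passage from almost actions to honest actions — is already contained in the proof of Theorem \ref{thm: main thm}, and the present argument only needs a correct invocation of the (standard) structure theory of $\mathrm{SL}_2(\bbZ)$.
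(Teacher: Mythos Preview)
Your proposal is correct and matches the paper's approach exactly: the paper presents this corollary with no proof beyond declaring it ``a special case of Theorem~\ref{thm: main thm}'', so the entire content is the observation that $\mathrm{SL}_2(\bbZ)$ is a finitely generated virtually free group. Your recollection of the amalgam decomposition and the explicit free subgroup of finite index is more detail than the paper supplies, but it is all standard and correct.
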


Interestingly P-stability is not, generally speaking, preserved under direct products --- for example,  the groups $F_2 \times \bbZ$ are not P-stable \cite{ioana2020stability}. This phenomenon is to be contrasted with the fact that the product groups $F_2\times (\bbZ/n\bbZ)$ are P-stable for all $n \in \bbN$, as follows from   Theorem \ref{thm: main thm}. As a consequence of the P-stability of these groups we are able to deduce:

\begin{corA}
\label{cor:graph automorphisms are locally testable}
Fix some $d,n \in \bbN$. Let $F_d$ be the free group of rank $d$ and $\mathcal{G}_d$ be the family of finite labelled Schreier graphs of  $F_d$. Then for every graph $\Gamma \in \mathcal{G}_d$ and every $\delta$-almost automorphism $\alpha$ of $\Gamma$ of $\delta$-almost order $n$, there is a graph $\Gamma' \in \mathcal{G}_d$ on the same vertex set as $\Gamma$ and $O(\delta)$-close to $\Gamma$  and an  automorphism $\alpha'$ of $\Gamma'$ which is $O(\delta)$-close to $\alpha$ and has order $n$.
\end{corA}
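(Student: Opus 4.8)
The plan is to recognize the data appearing in Corollary~\ref{cor:graph automorphisms are locally testable} as an almost-action of the group $F_d\times\bbZ/n\bbZ$ and to invoke Theorem~\ref{thm: main thm}.

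\textbf{The dictionary.} A finite labelled Schreier graph $\Gamma\in\mathcal{G}_d$ on a vertex set $V$ is exactly a $d$-tuple of permutations $\sigma_1,\dots,\sigma_d\in\Sym(V)$ — equivalently, an action of $F_d$ on $V$ — where the $i$-labelled edge out of a vertex $v$ goes to $\sigma_i(v)$. An automorphism of $\Gamma$ is precisely a permutation of $V$ commuting with every $\sigma_i$. Hence a $\delta$-almost automorphism $\alpha$ of $\Gamma$ is a permutation $\alpha\in\Sym(V)$ with normalized Hamming distance $\mathrm{dist}(\alpha\sigma_i,\sigma_i\alpha)=O(\delta)$ for each $i$ (the ``fraction of bad edges'' formulation forces the per-generator commutation up to the fixed factor $d$), and ``$\delta$-almost order $n$'' means in addition $\mathrm{dist}(\alpha^n,\id)\le\delta$. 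Thus the tuple $(\sigma_1,\dots,\sigma_d,\alpha)$ is an $O(\delta)$-almost homomorphism into $\Sym(V)$ of the group
\[
G=\bigl\langle\, s_1,\dots,s_d,t \ \bigm|\ t^n=1,\ [s_i,t]=1 \ (1\le i\le d)\,\bigr\rangle\;\cong\;F_d\times\bbZ/n\bbZ,
\]
since every defining relator of $G$ is sent to a permutation within Hamming distance $O(\delta)$ of the identity, and there is nothing further to check because $F_d$ is free.

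\textbf{Applying the main theorem.} The group $G=F_d\times\bbZ/n\bbZ$ is finitely generated (by $s_1,\dots,s_d,t$) and virtually free, as it contains $F_d$ as a subgroup of index $n$. So Theorem~\ref{thm: main thm} applies and $G$ is P-stable: our $O(\delta)$-almost homomorphism is $O(\delta)$-close to an honest homomorphism $\phi'\colon G\to\Sym(V)$. Setting $\sigma_i'=\phi'(s_i)$ and $\alpha'=\phi'(t)$, let $\Gamma'\in\mathcal{G}_d$ be the Schreier graph they define; it has the same vertex set $V$ and is $O(\delta)$-close to $\Gamma$. Since $[s_i,t]=1$ in $G$, the permutation $\alpha'$ commutes with every $\sigma_i'$ and is therefore a genuine automorphism of $\Gamma'$; it is $O(\delta)$-close to $\alpha$; and $(\alpha')^n=\phi'(t^n)=\id$, so $\alpha'$ has order dividing $n$ — and order exactly $n$ as soon as ``$\delta$-almost order $n$'' is read to mean that $\alpha^k$ stays bounded away from $\id$ for every proper divisor $k\mid n$, since then $(\alpha')^k\ne\id$ for $\delta$ small.

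\textbf{The main obstacle.} All of the substance is in Theorem~\ref{thm: main thm}; the corollary is essentially its reformulation, and the only genuine caveat is quantitative. A bare existence statement of P-stability would yield only closeness $o(1)$ as $\delta\to0$, whereas the corollary asserts the linear bound $O(\delta)$. I would obtain this from the effective form of P-stability that the proof of Theorem~\ref{thm: main thm} delivers for virtually free groups — namely, that every $\delta$-almost homomorphism of a finitely generated virtually free group into a symmetric group lies within Hamming distance $C\delta$ of an actual homomorphism, with $C$ depending only on the group and the chosen finite generating set. Granting that effective bound, everything else is the routine unwinding of definitions carried out above.
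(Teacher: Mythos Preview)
Your proposal is correct and follows essentially the same route as the paper: translate the data $(\Gamma,\alpha)$ into an $O(\delta)$-almost action of $F_d\times(\bbZ/n\bbZ)$, observe that this group is virtually free, and apply Theorem~\ref{thm: main thm}. The paper likewise obtains the linear $O(\delta)$ bound from the effective form of P-stability noted after the proof of Theorem~\ref{thm:quotient map is stable}; your treatment of the ``order exactly $n$'' point is in fact more explicit than the paper's.
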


More details and a precise statement of Corollary \ref{cor:graph automorphisms are locally testable} can be found in  \S\ref{sec:graph automorphisms} below. 

\subsection*{Stable epimorphisms}

Stallings theorem on ends of groups \cite{stallings1968torsion, stallings1972group} implies that a finitely generated group $G$ is virtually free   if and only if $G$ is isomorphic to the fundamental group $\piG$ of a finite graph of groups $\calG$ with finite vertex groups with respect to some maximal spanning tree $T$ (see \S\ref{sec:fundamental group} for the definition of $\piG$).

Naturally associated to the graph of groups $\calG$ and the maximal spanning tree $T$ there is another group $\barpiG$ admitting a quotient map $\barpiG \to \piG$. This group  is isomorphic to the free product of the vertex groups of $\calG$ with the topological fundamental group of the underlying graph of $\calG$.  As finite groups are  P-stable  \cite{glebsky2009almost} it follows immediately that the group $\barpiG$ is P-stable. 

Motivated by this, we introduce a relative notion of P-stable epimorphisms, see Definition \ref{def:stable epi}. In particular, a finitely generated group $G$ is P-stable in the usual sense if and only if the natural epimorphism from the free group in the generators of $G$ onto the group $G$  is P-stable.  Theorem \ref{thm: main thm} is thereby reduced to  the following statement, to which the major part of this work is dedicated.

\begin{theorem}
\label{thm:quotient map is stable}
The epimorphism $\barpiG \to \piG$ is P-stable.
\end{theorem}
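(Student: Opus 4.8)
The plan is to unwind the definition of a P-stable epimorphism and reduce the statement to a quantitative folding argument for the finite graph of finite groups $\calG$. As a first step, recall that $\barpiG$ is a free product of the finite vertex groups of $\calG$ with a finitely generated free group, hence P-stable; so any sequence of almost-homomorphisms $\phi_k\colon\barpiG\to\Sym(X_k)$ witnessing non-stability of $\barpiG\to\piG$ is uniformly close to a sequence of genuine homomorphisms, and after replacing the $\phi_k$ by these I may assume each $\phi_k$ is an honest homomorphism. The hypothesis that the $\phi_k$ "almost factor through $\piG$" says precisely that the $\phi_k$-images of the finitely many relators of $\piG$ as a quotient of $\barpiG$ --- the elements $\alpha_e(c)\omega_e(c)^{-1}$ for tree edges $e$ and $t_e\alpha_e(c)t_e^{-1}\omega_e(c)^{-1}$ for non-tree edges, with $c$ ranging over the finite edge groups $G_e$ --- tend to the identity in the normalized Hamming metric; write $\epsilon_k\to0$ for this defect. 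The goal then becomes: produce honest homomorphisms $\psi_k\colon\piG\to\Sym(X_k)$ with $d(\psi_k\circ p,\phi_k)=O(\epsilon_k)$ on a fixed generating set, where $p\colon\barpiG\to\piG$ is the quotient map.

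Second, I would reduce to a single edge. Writing $\piG$ as the fundamental group of a graph of finite groups assembled from the vertex groups by amalgamations over the tree edges followed by HNN extensions over the non-tree edges, the map $p$ factors as a composition $\barpiG=H_0\to H_1\to\cdots\to H_N=\piG$ of elementary folds, the $i$-th of which merely endows a single edge with its true (finite) edge group; up to an inert free factor each is a fold of a free product to an amalgam $A\ast_C B$, or of $A\ast\bbZ$ to an HNN extension $A\ast_C$, with $C$ finite and $A$ (and $B$) virtually free. Because the relators of $\piG$ contain the relators of each $H_i$ over $H_{i-1}$, a routine propagation argument --- each elementary fold, being P-stable, corrects the $\phi$ at the cost of an $O(\epsilon)$ perturbation, and there are only finitely many folds --- shows that if every elementary fold is a P-stable epimorphism then so is $p$. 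It therefore suffices to treat the two elementary cases.

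Third, and here the real work lies, I would prove that each elementary fold is P-stable by a folding/surgery argument on the finite permutation model. After conjugating one side of the edge by the (known) stable letter in the HNN case, the task is: given two honest actions of the finite group $C$ on $X_k$ that are $O(\epsilon_k)$-close and arise as restrictions of honest actions of the vertex groups, modify the vertex-group actions by $O(\epsilon_k)$, keeping them honest, so that the two $C$-restrictions become literally equal (amalgam case) or conjugate by the adjusted stable letter (HNN case). The structural input that makes this possible is that $C$ and all vertex groups are finite: the local configuration around each of the $O(\epsilon_k|X_k|)$ ``bad'' points (where some relation instance fails) then has bounded size, each incoherence can be healed by a surgery of bounded size, and the $C$-saturations and vertex-orbit-saturations needed for the healing multiply a set's cardinality only by a bounded factor, so the total Hamming cost stays $O(\epsilon_k|X_k|)$. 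This is exactly the point at which finiteness of the edge groups cannot be relaxed: $F_2\times\bbZ$, which fails to be P-stable, is the fundamental group of a graph of groups with one vertex group $F_2$ and a single loop whose edge group is the infinite group $F_2$.

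The step I expect to be the main obstacle is making these local repairs globally consistent at bounded total cost while keeping every vertex-group action an honest action throughout. Two phenomena must be handled. First, an incoherence between the two $C$-actions at an edge cannot in general be removed by conjugation: the two restrictions may be $O(\epsilon_k)$-close honest $C$-actions that are not isomorphic --- a single transposition versus the identity are close but not conjugate --- so one must genuinely alter the actions at a bounded number of points and then repair the resulting scar, which forces one to work in the permutation model rather than up to conjugacy, and may require either reusing ``trivial'' parts of $X_k$ or first establishing a flexible version of the statement. Second, altering a vertex-group action near one edge can violate the relations at adjacent edges, so all repairs must be carried out simultaneously at the bad set and organized so the dependency-closure of the bad set stays linear in $\epsilon_k|X_k|$ rather than cascading through all of $X_k$ --- and it is the finiteness of $\calG$ together with the finiteness of its vertex and edge groups that controls this cascade. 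Once a genuinely coherent structure is produced on $X_k$ it is by construction an honest $\piG$-action $\psi_k$ with $\psi_k\circ p$ within $O(\epsilon_k)$ of $\phi_k$ on the generating set, which is what P-stability of the epimorphism demands.
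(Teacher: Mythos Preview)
Your plan correctly isolates the central difficulty --- two $\delta$-close honest $C$-actions need not be isomorphic, and local repairs can cascade --- but the edge-by-edge factorisation of your second paragraph does not resolve it. After the first fold the groups $A$, $B$ in subsequent elementary folds are only virtually free (as you yourself note), so the appeal in your third paragraph to ``all vertex groups are finite'' to bound local configurations is inconsistent with your own setup: the $A$-orbit of a bad point can be all of $X$, and local surgery on the $A$-side is unavailable. For tree edges one can salvage this by always amalgamating a fresh finite $G_v$ and modifying only that side; but for a non-tree HNN edge both embeddings of $C$ land inside the already-assembled virtually free group, with no finite free factor left to absorb the correction. Your final paragraph recognises exactly this cascade but offers no mechanism to control it, and in fact the reduction to a single elementary fold has not made the core problem any easier.

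The paper's route is genuinely different: it treats all edge constraints simultaneously via linear algebra over $\bbZ$, and this is the missing idea. Each vertex-group action is encoded by its vector of orbit-type multiplicities in a free $\bbZ$-module $\Lambda_{G_v}$; the compatibility conditions at \emph{all} edges become the kernel of a single $\bbZ$-linear map $\DG\colon\bigoplus_v\Lambda_{G_v}\to\bigoplus_e\Lambda_{G_e}$, and a $\delta$-almost $\piG$-action lands $O(\delta|X|)$-close to $\ker\DG$. A standalone lemma (``$\bbZ$-linear systems are stable'', proved by induction on faces of the positive cone) then produces a nearby lattice point $\lambda'\in\Lambda_V^+\cap\ker\DG$ with $\|\lambda'\|_V=|X|$. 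With this \emph{global} target fixed in advance, the honest $\piG$-action is rebuilt by induction along the spanning tree, at each step adjusting only one finite vertex group to hit its prescribed $\lambda'_v$; when one finally reaches a non-tree edge the two $G_e$-restrictions are already isomorphic by design, so only the stable letter $s_e$ needs adjusting and no vertex-group action is ever touched twice. The cascade simply does not arise.
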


A detailed outline of the proof of Theorem \ref{thm:quotient map is stable} can be found in \S\ref{subsec: outline of proof} below, after the necessary definitions and notations are set in place. 

\section{P-stable epimorphisms}
\label{sec: relative stability}

Let $X$ be a finite set. Consider the normalized Hamming distance $d_X$ on the symmetric group $\Sym(X)$ given by
$$ d_X(\sigma_1,\sigma_2) = \frac{1}{|X|} | \{ x \in X \: : \: \sigma_1(x) \neq \sigma_2(x) \} |$$
for all pairs  $\sigma_1, \sigma_2 \in \Sym(X)$. Note that the metric $d_X$ is bi-invariant.

Let $\overline{G}$ be a group with finite generating set $S$. Define a metric $d_{X,S}$  on the   set $\mathrm{Hom}(\overline{G}, \Sym(X))$ of all group homomorphisms $\rho : \overline{G} \to \Sym(X)$ by
$$d_{X,S}(\rho,\rho')=\sum_{s\in S} d_X(\rho(s),\rho(s'))$$
for each    pair   $\rho,\rho' \in\mathrm{Hom}(\overline{G}, \Sym(X))$.

Let $ N \normalin \overline{G}$ be a normal subgroup normally generated   by some  finite subset   $R \subset \overline{G}$. Denote $G = \overline{G} / N$. We say 
that an action  $\rho : \overline{G} \to \Sym(X)$ is   a  \emph{$\delta$-almost $G$-action}  
 if
$$ \sum_{r \in R} d_X( \rho(r), \id ) < \delta.$$
This terminology is justified by the observation that $\rho$ is an honest $G$-action if and only if it is a $\delta$-almost $G$-action with respect to $\delta = 0$. Note that strictly speaking this notion depends on fixing the normal generating set $R$. 

\begin{definition}
\label{def:stable epi}
The  epimorphism $\phi:\overline{G} \to G $ is \emph{P-stable}  if for every $\varepsilon > 0$ there is a $\delta > 0$ such that for every $\delta$-almost $G$-action $\rho : \overline{G} \to \Sym(X)$ there is a $G$-action $\rho' : G \to \Sym(X)$ with $d_{X,S}(\rho,\rho'\circ \phi) < \varepsilon$.
\end{definition}

\begin{lemma}
The  P-stability of the epimorphism $\phi : \overline{G} \to G$ is a well-defined notion (i.e. it is independent of the choices of the finite sets $S$ and $R$).
\end{lemma}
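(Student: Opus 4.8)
The plan is to show two things: that replacing the finite generating set $S$ of $\overline{G}$ changes the metric $d_{X,S}$ only by a multiplicative factor depending on the two generating sets but not on $X$, and that replacing the finite normal generating set $R$ of $N$ changes the ``defect'' $\sum_{r\in R} d_X(\rho(r),\id)$ only by such a factor. Given these two comparisons, the equivalence of the two instances of Definition \ref{def:stable epi} is a routine $\varepsilon$--$\delta$ bookkeeping.

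First I would record the elementary consequences of the bi-invariance of $d_X$: (i) $d_X(\sigma_1\cdots\sigma_k,\tau_1\cdots\tau_k)\le\sum_{i=1}^{k}d_X(\sigma_i,\tau_i)$, proved by induction from $d_X(\sigma_1\sigma_2,\tau_1\tau_2)\le d_X(\sigma_1\sigma_2,\tau_1\sigma_2)+d_X(\tau_1\sigma_2,\tau_1\tau_2)=d_X(\sigma_1,\tau_1)+d_X(\sigma_2,\tau_2)$; (ii) $d_X(\sigma\ii,\tau\ii)=d_X(\sigma,\tau)$; and (iii) $d_X(\sigma\tau\sigma\ii,\id)=d_X(\tau,\id)$. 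From (i) and (ii), if $w$ is a word of length $\le\ell$ in $S^{\pm1}$ then $d_X(\rho(w),\rho'(w))\le\ell\, d_{X,S}(\rho,\rho')$ for all $\rho,\rho'\in\mathrm{Hom}(\overline{G},\Sym(X))$ (each letter contributes at most $\max_{s\in S}d_X(\rho(s),\rho'(s))\le d_{X,S}(\rho,\rho')$); and from (i) and (iii), if an element of $N$ is written as a product of $\le m$ conjugates of elements of $R^{\pm1}$, then $d_X(\rho(\cdot),\id)\le m\sum_{r\in R}d_X(\rho(r),\id)$.

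Next I would carry out the two comparisons. Given finite generating sets $S,S'$ of $\overline{G}$, express each $s'\in S'$ as a word in $S^{\pm1}$; since $S'$ is finite there is a maximal such length, and the previous paragraph yields a constant $L=L(S,S')$, independent of $X$, with $d_{X,S'}(\rho,\rho')\le L\, d_{X,S}(\rho,\rho')$ for all $\rho,\rho'$, and symmetrically a constant $L'=L'(S',S)$ with $d_{X,S}(\rho,\rho')\le L'\, d_{X,S'}(\rho,\rho')$. Likewise, given finite normal generating sets $R,R'$ of $N$, each element of $R'$ lies in $N$, which is normally generated by $R$, hence is a product of boundedly many conjugates of elements of $R^{\pm1}$ (and symmetrically); this gives constants $M=M(R,R')$ and $M'=M'(R',R)$, independent of $X$, with $\sum_{r'\in R'}d_X(\rho(r'),\id)\le M\sum_{r\in R}d_X(\rho(r),\id)$ and $\sum_{r\in R}d_X(\rho(r),\id)\le M'\sum_{r'\in R'}d_X(\rho(r'),\id)$ for every $\rho$.

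Finally I would assemble these. Suppose $\phi$ is P-stable with respect to $(S,R)$ and fix $\varepsilon'>0$; set $\varepsilon:=\varepsilon'/L$, let $\delta>0$ be a constant supplied by P-stability with respect to $(S,R)$ for this $\varepsilon$, and set $\delta':=\delta/M'$. If $\rho:\overline{G}\to\Sym(X)$ is a $\delta'$-almost $G$-action with respect to $R'$, then $\sum_{r\in R}d_X(\rho(r),\id)\le M'\sum_{r'\in R'}d_X(\rho(r'),\id)<M'\delta'=\delta$, so $\rho$ is a $\delta$-almost $G$-action with respect to $R$; hence there is a $G$-action $\rho':G\to\Sym(X)$ with $d_{X,S}(\rho,\rho'\circ\phi)<\varepsilon$, and then $d_{X,S'}(\rho,\rho'\circ\phi)\le L\, d_{X,S}(\rho,\rho'\circ\phi)<L\varepsilon=\varepsilon'$. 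Thus $\phi$ is P-stable with respect to $(S',R')$, and the reverse implication follows by exchanging the roles of the two pairs. The only point that needs care -- and the closest thing here to a difficulty -- is that every comparison constant $L,L',M,M'$ depends solely on the chosen finite sets $S,S',R,R'$ and never on the finite set $X$; this uniformity in $X$ is exactly what allows the quantifier ``for every $\delta$-almost $G$-action $\rho:\overline{G}\to\Sym(X)$'' in Definition \ref{def:stable epi} to be transferred between the two sets of choices. I do not expect a genuine obstacle beyond this.
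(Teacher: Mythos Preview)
Your proof is correct and follows essentially the same approach as the paper's own proof: establish bi-Lipschitz equivalence of the metrics $d_{X,S}$ and $d_{X,S'}$ and of the defect functionals associated to $R$ and $R'$, with constants independent of $X$, and then do the $\varepsilon$--$\delta$ transfer. The paper merely asserts these two comparisons and leaves the details implicit, whereas you spell out the elementary consequences of bi-invariance and the bookkeeping explicitly.
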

\begin{proof}
It is easy to see that if $S_1$ and $S_2$ are  two finite generating sets for the group $G$ then the resulting metrics $d_{X,S_1}$ and $d_{X,S_2}$ on the set $\mathrm{Hom}(\overline{G}, \Sym(X))$ are bi-Lipschitz equivalent. A similar argument, taking into account the bi-invariance of the normalized Hamming metric $d_X$, shows that if $R_1$ and $R_2$ are two finite normal generating sets for the subgroup $N \normalin \overline{G}$ then there is a constant $C = C(R_1, C_2) > 1$ such that
$$ C^{-1} \sum_{r \in R_2} d_X( \rho(r), \id ) \le \sum_{r \in R_1} d_X( \rho(r), \id ) \le C \sum_{r \in R_2} d_X( \rho(r), \id ).$$
The conclusion follows from these observations.
\end{proof}

Let $H$ be any group admitting a finite generating set $S$ and $F(S)$ be the free group in the generators $S$. Observe that the natural homomorphism $F(S) \to H$ is P-stable if and only if the group $H$ is $P$-stable in the usual sense.
\begin{remark}
Every split epimorphism is P-stable.
\end{remark} 


The following follows immediately from Definition \ref{def:stable epi}.

\begin{lemma}\label{lem: transitivity  of relative stability}
Let $\overline{\overline{G}} \overset{\phi} \epi \overline{G} \overset{\psi} \epi G$ be a sequence of epimorphisms with normally finitely generated kernels. If $\phi$ and $\psi$ are P-stable then $\psi \circ \phi$ is P-stable.
\end{lemma}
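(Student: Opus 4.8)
The plan is to chain the two stability corrections: first replace the given almost-$G$-action of $\overline{\overline{G}}$ by an honest $\overline{G}$-action using P-stability of $\phi$, and then replace that $\overline{G}$-action --- which is itself still only an \emph{almost} $G$-action --- by an honest $G$-action using P-stability of $\psi$. The only delicate point is the quantifier bookkeeping needed to make the second step applicable.

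First I would fix compatible finite data. Let $T$ be a finite generating set of $\overline{\overline{G}}$; then $\phi(T)$ generates $\overline{G}$ and $\psi(\phi(T))$ generates $G$. Let $R_\phi$ and $R_\psi$ be finite normal generating sets of $\ker\phi$ and $\ker\psi$, and fix a finite set $\widetilde{R}_\psi\subset\overline{\overline{G}}$ with $\phi(\widetilde{R}_\psi)=R_\psi$. Since $\phi$ is surjective it carries the normal closure of $\widetilde{R}_\psi$ onto the normal closure of $R_\psi$, and a short computation then shows that $\ker(\psi\circ\phi)=\phi\ii(\ker\psi)$ is the normal closure of $R_\phi\cup\widetilde{R}_\psi$; by the well-definedness lemma I may use this set to test $\delta$-almost $G$-actions of $\overline{\overline{G}}$, the set $R_\phi$ to test almost $\overline{G}$-actions, and $R_\psi$ to test almost $G$-actions of $\overline{G}$. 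Let $L$ be the maximal length of an element of $\widetilde{R}_\psi$ as a word in $T\cup T\ii$.

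The argument then runs as follows. Given $\varepsilon>0$, apply P-stability of $\psi$ with error parameter $\varepsilon/(2|T|)$ to obtain $\delta_2>0$; apply P-stability of $\phi$ with error parameter $\varepsilon_1:=\min\{\varepsilon/2,\ \delta_2/(2|R_\psi|(L+1))\}$ to obtain $\delta_1>0$; and set $\delta:=\min\{\delta_1,\delta_2/2\}$. Let $\rho:\overline{\overline{G}}\to\Sym(X)$ be a $\delta$-almost $G$-action. Since $R_\phi\subseteq R_\phi\cup\widetilde{R}_\psi$, the map $\rho$ is in particular a $\delta_1$-almost $\overline{G}$-action, so P-stability of $\phi$ produces an honest $\overline{G}$-action $\rho_1$ with $d_{X,T}(\rho,\rho_1\circ\phi)<\varepsilon_1$. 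Next I would verify that $\rho_1$ is a $\delta_2$-almost $G$-action: writing each $\widetilde{r}\in\widetilde{R}_\psi$ as a $T$-word of length $\le L$ and using the bi-invariance of $d_X$ (which makes bounded products of close permutations fellow-travel) gives $d_X(\rho_1(\phi(\widetilde{r})),\rho(\widetilde{r}))\le L\,d_{X,T}(\rho,\rho_1\circ\phi)<L\varepsilon_1$, whence
\[
\sum_{r\in R_\psi}d_X(\rho_1(r),\id)\ <\ |R_\psi|L\varepsilon_1+\sum_{\widetilde{r}\in\widetilde{R}_\psi}d_X(\rho(\widetilde{r}),\id)\ <\ |R_\psi|L\varepsilon_1+\delta\ \le\ \delta_2 .
\]
P-stability of $\psi$ then yields an honest $G$-action $\rho_2$ with $d_{X,\phi(T)}(\rho_1,\rho_2\circ\psi)<\varepsilon/(2|T|)$, so each generator $t\in T$ satisfies $d_X(\rho_1(\phi(t)),\rho_2(\psi(\phi(t))))<\varepsilon/(2|T|)$, and the triangle inequality gives
\[
d_{X,T}(\rho,\rho_2\circ\psi\circ\phi)\ \le\ d_{X,T}(\rho,\rho_1\circ\phi)+\sum_{t\in T}d_X\bigl(\rho_1(\phi(t)),\rho_2(\psi(\phi(t)))\bigr)\ <\ \varepsilon_1+|T|\cdot\tfrac{\varepsilon}{2|T|}\ \le\ \varepsilon .
\]
Since $\rho_2\circ\psi\circ\phi$ factors through $G=\overline{\overline{G}}/\ker(\psi\circ\phi)$, this is exactly what P-stability of $\psi\circ\phi$ demands. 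The main obstacle is precisely this choice of $\varepsilon_1$: it must be taken small relative to $\delta_2$, and not merely relative to $\varepsilon$, because the honest action $\rho_1$ produced by the first correction is only an approximate $G$-action, and it must be approximate to within $\delta_2$ for the second correction to be applicable; once the dependence on the word length $L$ and on $|T|$ is tracked, the rest is the triangle inequality and the bi-invariance of the normalized Hamming metric.
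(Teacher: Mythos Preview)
Your argument is correct and is precisely the natural two-step correction one expects; the paper itself gives no proof at all, simply declaring that the lemma ``follows immediately from Definition~\ref{def:stable epi}.'' Your write-up just makes explicit the quantifier bookkeeping (choosing $\varepsilon_1$ small relative to $\delta_2$, and tracking the word-length constant $L$) that the authors suppress.
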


We   have occasion to use Lemma \ref{lem: transitivity  of relative stability} only in the following special form:   if the group $\overline{G}$ is $P$-stable   and $\phi : \overline{G} \epi G$ is a $P$-stable epimorphism then the group $G$ is $P$-stable.

\begin{remark}
It seems an interesting problem to look for other non-trivial instances of P-stable epimorphisms.
\end{remark}

\section{The fundamental group of a graphs of groups}
\label{sec:fundamental group}

We recall the definition of the fundamental group of a graph of groups and in particular list its defining relations. This is followed by a detailed sketch of proof for our Theorem \ref{thm: main thm} a well as for the \enquote{relative} Theorem \ref{thm:quotient map is stable}. Lastly we introduce some useful asymptotic notations.

\subsection*{Graphs of groups}
We use Serre's notation for graphs \cite{serre1977arbres}. 
In this notation, a graph $\Gamma$ consists of a set of vertices $V(\Gamma)$ and a set of edges $E(\Gamma)$. Each edge $e \in E(\Gamma)$ has an origin   $o(e) \in V(\Gamma)$ and a terminus   $t(e) \in V(\Gamma)$. Moreover each edge $e \in E(\Gamma)$ admits a distinct opposite edge   $\bar{e} \in E(\Gamma)$ that satisfies $\bar{\bar{e}} = e$,  $o(\bar{e}) = t(e)$ and $t(\bar{e}) = o(e)$.   Every pair of \enquote{oriented}
edges $\{e,\bar{e}\} \subset E(\Gamma)$ represents a single 
 \enquote{geometric} edge. 
  An orientation of the graph $\Gamma$ is a subset $\vec{E}(\Gamma) \subset E(\Gamma)$ containing exactly a single edge from each pair $\{e,\bar{e}\}$.

\begin{definition}
A \emph{graph of groups} $\calG$ is 
$$\calG = (\Gamma, \{G_v\}_{v\in V(\Gamma)}, \{G_e\}_{e\in E(\Gamma)}, \{i_e:G_e\to G_{t(e)}\}_{e\in E(\Gamma)})$$ 
where $\Gamma$ is a connected graph, $G_v$ is a vertex group for all $v \in V(\Gamma)$, $G_e$ is an edge group for all edges $e \in E(\Gamma)$ with $G_e = G_{\bar{e}}$ and $\iota_e : G_e \to G_{t(e)}$ are injective homomorphisms.
\end{definition}

Let $\calG$ be a graph of groups. Fix an orientation $\vec{E}(\Gamma) $ and a maximal spanning tree $T \subset \Gamma$. 
Consider the group $\barpiG$ defined as the  free product
$$ \overline{\pi}_1(\calG, T) = *_{v\in V(\Gamma)} G_v * F(\{s_e\}_{e\in \vec{E}(\Gamma) })$$
where $F(\cdot)$ denotes the free group over the given basis. It will be convenient to consider  the following generating set 
\[S_\calG=\bigcup_{v\in V(\Gamma)} G_v \cup \{s_e\}_{e\in \vec{E}(\Gamma)}.\]

\begin{definition}
The \emph{fundamental group} $\pi_1(G,T)$ of the graph of groups $\calG$ with respect to the subtree $T$ is the quotient of the free product $\barpiG$ by the normal subgroup generated   by the relations
$$
R_\calG = \begin{cases}
    s_e = 1  & \forall e \in \vec{E}(T), \\
    s_e\ii i_e(g_e) s_e = i_{\bar{e}}(g_e)   &\forall e\in \vec{E}(\Gamma), g_e\in G_e.
\end{cases}
$$
\end{definition}

\begin{remark}
The fundamental group $\piG$ as well as the group $\barpiG$ are independent of the choice of maximal spanning tree $T$ up to isomorphism \cite[I.\S5]{serre1977arbres}.
\end{remark}

For the remainder of the paper we will assume that $\calG$ is a finite graph of groups, with finite vertex groups. In particular
$S_\calG$ is a finite generating set for the group $\barpiG$.

\subsection*{Outline of the proof and of the paper}\label{subsec: outline of proof}

Note that the group $\overline{\pi}_1(\calG,T)$   is a free product of finite groups and of a free group. As such $\overline{\pi}_1(\calG,T)$ is easily seen to be P-stable  \cite{glebsky2009almost}. 
In light of Lemma \ref{lem: transitivity  of relative stability} and the remarks following it,  our main result Theorem \ref{thm: main thm} follows immediately from Theorem \ref{thm:quotient map is stable} of the introduction. In other words it suffices to show that  the epimorphism $\overline{\pi}_1(\calG, T) \to \pi_1(\calG, T)$ is P-stable.

Towards this goal consider some $\delta$-almost $\piG$-action $\rho :\barpiG\to \Sym(X)$. In particular $\rho$ restricts to  actions of the finite vertex groups $G_v$. For   $\rho$ to factorize through the fundamental group $\piG$ it is necessary that for every edge $e \in \vec{E}(\Gamma)$ the two actions $\rho\circ i_e$ and $\rho\circ i_{\bar{e}}$ of the edge group $G_e$ are isomorphic.

It is clear that the isomorphism type of an action of a finite group on a finite set is characterized by the number of occurrences of each of its finitely many transitive action types. In \S\ref{sec: from actions to linear algebra}, we show how to represent this  data using a vector in some canonical $\bbZ$-module associated to the  group.
The restriction maps $\rho|_{G_v} \mapsto (\rho\circ i_e)|_{G_e}$ define a $\bbZ$-linear map $\DG$ between the respective $\bbZ$-modules. The above mentioned condition (that the two actions $\rho\circ i_e$ and $\rho\circ i_{\bar{e}}$ of the edge group $G_e$ are isomorphic) can be described as the kernel of this  $\bbZ$-linear map $\DG$. Lastly, the fact that $\rho$ is a $\delta$-almost action of $\piG$ translates to having  a small image under the   map $\DG$. 

 In \S\ref{sec: linear cones}, we show that \enquote{$\bbZ$-linear maps are stable} in the following sense: an exact $\bbZ$-solution to a linear system of equations and inequalities can be found nearby a $\delta$-almost solution. This is applied  to the  linear map $\DG$. An exact $\bbZ$-solution represents an isomorphism type of an action of $\barpiG$ on a finite set of the same size as $X$, which is statistically close to $\rho$ and admits refinements to   well-defined isomorphism types of actions of the edge groups.

Finally, in \S\ref{sec: from linear algebra to actions} we show how given a $\delta$-almost action $\rho$, and a nearby exact $\bbZ$-solution to the corresponding linear system of equations, one can find a nearby action $\rho'$  factoring via $\piG$.

We will make repeated use of the finiteness of vertex and edge groups via:
\begin{observation}\label{obs: existence of invariant subsets}
Let $G$ be a finite group. If  $G$ acts on a finite set $X$ and  $Y\subset X$ then there exists a $G$-invariant subset $Y'\subseteq Y$ such that $|X-Y'| \le |G| |X-Y|$. 
\end{observation}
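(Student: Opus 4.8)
## Proof proposal for the Observation

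\textbf{Overview.}
The plan is to prove the Observation directly by a greedy removal argument: start with $Y$, and whenever $Y$ fails to be $G$-invariant, there is some $y \in Y$ and $g \in G$ with $gy \notin Y$; we will throw away the entire orbit $Gy$. The point is that every orbit we discard in this way must ``touch'' a point of $X - Y$, and since each orbit has size at most $|G|$, the total number of points removed is at most $|G|$ times the number of orbits we discard, which in turn is at most $|X - Y|$ (as distinct orbits are disjoint and each consumes at least one element of $X - Y$).

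\textbf{Main steps.}
First I would set $Y' = \{ y \in Y : Gy \subseteq Y \}$, i.e.\ the union of all $G$-orbits entirely contained in $Y$; this is clearly the largest $G$-invariant subset of $Y$, and it is $G$-invariant by construction. It remains to bound $|X - Y'|$. Decompose $X$ into its $G$-orbits $O_1, \dots, O_k$. An orbit $O_i$ is contained in $Y'$ precisely when $O_i \subseteq Y$; otherwise $O_i \cap (X - Y) \neq \emptyset$, so $O_i$ contributes at least one point to $X - Y$. Hence the orbits $O_i$ with $O_i \not\subseteq Y'$ can be injectively assigned (choosing one point each) to distinct elements of $X - Y$, so there are at most $|X - Y|$ such orbits. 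Since $X - Y' = \bigcup_{O_i \not\subseteq Y'} O_i$ and each orbit satisfies $|O_i| = [G : \Stab(y)] \le |G|$, we get
$$|X - Y'| \;=\; \sum_{O_i \not\subseteq Y'} |O_i| \;\le\; |G| \cdot |\{ i : O_i \not\subseteq Y' \}| \;\le\; |G|\,|X - Y|,$$
which is the claimed bound. Finally $Y' \subseteq Y$ by definition, completing the proof.

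\textbf{Anticipated obstacle.}
There is essentially no obstacle here — the statement is elementary and the orbit-counting argument is routine. The only point requiring a moment's care is the injectivity of the assignment of ``bad'' orbits to elements of $X - Y$: this is immediate because distinct orbits are disjoint, so picking any one element from each bad orbit's intersection with $X - Y$ yields distinct elements. One could alternatively phrase the whole thing as the greedy iterative removal described in the overview, but the static description via $Y'$ as the union of fully-contained orbits is cleaner and avoids any bookkeeping over iterations.
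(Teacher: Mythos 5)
Your proof is correct. The paper states this as an unproved Observation, and your argument is the natural one: take $Y'$ to be the union of all $G$-orbits contained in $Y$ (the largest $G$-invariant subset of $Y$); every orbit not contained in $Y$ meets $X-Y$, distinct orbits are disjoint, and each orbit has size at most $|G|$, which yields $|X-Y'| \le |G|\,|X-Y|$. Equivalently, one can observe that $X - Y' = \bigcup_{g \in G} g^{-1}(X-Y)$, which gives the same bound in one line --- but this is just your counting argument in a different guise, so there is nothing substantive to add.
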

\subsection*{Notations}

We will need to consider inequalities involving   quantities depending on the graph of groups $\calG$ in question (such as the number of vertices or edges,  the sizes of the vertex groups $G_v$, etc.). To avoid cumbersome formulas it would be convenient to introduce the following asymptotic notation. 

We write $A\prec_\bullet B$ if there exists a constant $c=c(\bullet)$ such that $A\le cB$. We omit the subscript when it is clear from the context.  

\section{Set of actions on finite sets}
\label{sec: from actions to linear algebra}
Let $G$ be any group. Let $\Acts(G)$ denote the set of all actions of the group $G$ on finite sets considered up to isomorphism. Similarly let $\Trans(G)$ be the set of transtive actions of $G$ on finite sets considered up to isomorphism. 

Every action $ \rho: G\to \Sym(X)$ on some finite set $X$ can be decomposed into a disjoint union of its finitely many orbits $O_1,\ldots,O_n \subseteq X$. The restriction $\rho\restrict _{O_i} :G \to \Sym(O_i)$  of $\rho$ to each orbit $O_i$ is transitive for all $i=1,\ldots,n$. 
The isomorphism class of the action $\rho$ is determined by counting the isomorphism classes of its restricted actions $\rho\restrict_{O_i}$ with multiplicity.

This observation  enables us to identify the set of actions $\Acts(G)$ with a non-negative cone in the free $\bbZ$-module $\Lambda_G$ with basis $\Trans(G)$, namely
$$\Lambda_G= \bigoplus_{\rho\in \Trans(G)} \bbZ\rho.$$
 More precisely, given an action $\rho \in \Acts(G)$ we define
 $$  \rho^\sharp =\sum_{O\in G\backslash X} \rho \restrict _O \in \Lambda_G.$$
The correspondence $\rho \mapsto \rho^\sharp$ is injective and its image $\Acts(G)^\sharp$ in $\Lambda_G$ is  the non-negative cone
$$\Lambda_G^+ :=  \{ (\lambda_\rho)_{\rho \in \Trans(G)} \: : \: \lambda_\rho \ge 0 \}.$$

We observe that the correspondence $\rho\to \rho^\sharp$ is additive in the following sense:   any two actions $\rho_1,\rho_2 \in \Acts(G)$ with $\rho_i:G\to\Sym(X_i)$ for  $i\in \{1,2\}$ satisfy $$(\rho_1
\coprod \rho_2)^\sharp = \rho_1^\sharp + \rho_2^\sharp$$ where $
\rho_1 \coprod \rho_2:G\to \Sym(X_1\coprod X_2)$ is the disjoint union of   $\rho_1$ and $\rho_2$.

We find it convenient to introduce a norm $\|\cdot\|_G$ on the $\bbZ$-module $\Lambda_G$ by
$$ \|\lambda\|_G = \sum_{\substack{\rho \in \Trans(G)\\\rho : G \to \Sym(X_\rho) }} |\lambda_\rho| \cdot |X_\rho|   \quad \quad  \forall \lambda = (\lambda_\rho) \in \Lambda_G.$$
This norm is chosen in such a way that every action $\rho \in \Acts(G)$ with  $\rho:G\to\Sym(X)$ satisfies $\|\rho^\sharp\|_G = |X|$.


\subsubsection*{Pullback on set of actions.} 
Let $H$ be any group admitting a homomorphism    $i :H\to G$. There is a pullback map $i^*$ on the corresponding sets of isomorphism classes of actions on finite sets is given by
$$ i^* : \Acts(G) \to \Acts(H), \quad i^* \rho = \rho \circ i \quad \forall \rho \in \Acts(G).$$
Allowing for a slight abuse of notation, we also let  $i^*$   denote   the resulting $\bbZ$-linear map $i^* : \Lambda_G \to \Lambda_H$ defined in terms of the basis   by
$$i^*(\rho ^\sharp ) = (\rho\circ i)^\sharp \quad \rho\in\Trans(G). $$

\begin{observation}\label{obs: extending actions from subgroups} 
Let $\phi:H\to\Sym(X)$ be a group action such that $\phi^\sharp = i^*(\lambda)$ for some $\lambda\in \Lambda_G$. 
 Then there exists a group  action $\rho:G\to\Sym(X)$ satisfying $\rho^\sharp = \lambda$ and $\rho\circ i = \phi$.
\end{observation}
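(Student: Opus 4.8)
The plan is to realize the vector $\lambda$ by a concrete action of $G$ on a finite set and then \emph{transport} that action along an $H$-equivariant bijection onto $\phi$. First I would note that the hypothesis implicitly places $\lambda$ in the non-negative cone $\Lambda_G^+$: the asserted conclusion $\rho^\sharp = \lambda$ can only hold if $\lambda \in \Acts(G)^\sharp = \Lambda_G^+$, so there is no loss in assuming this. Granting it, the bijectivity of the correspondence $\rho \mapsto \rho^\sharp$ between $\Acts(G)$ and $\Lambda_G^+$ furnishes an action $\psi : G \to \Sym(Y)$ on some finite set $Y$ with $\psi^\sharp = \lambda$. This $\psi$ is the right abstract model; what remains is to move it onto the set $X$ in a way compatible with $\phi$.

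Next I would compare the restricted action $\psi \circ i$ of $H$ with the given action $\phi$. By the definition of the pullback $i^*$ on the level of $\bbZ$-modules, $(\psi \circ i)^\sharp = i^*(\psi^\sharp) = i^*(\lambda) = \phi^\sharp$. Since $\cdot^\sharp$ is injective on $\Acts(H)$, the $H$-actions $\psi \circ i$ and $\phi$ are isomorphic; in particular their underlying sets have the same cardinality, so $|Y| = |X|$. Fix an $H$-equivariant bijection $f : Y \to X$ realizing this isomorphism, i.e. with $f \circ (\psi \circ i)(h) = \phi(h) \circ f$ for all $h \in H$.

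Finally I would define $\rho : G \to \Sym(X)$ by transport of structure, $\rho(g) = f \circ \psi(g) \circ f^{-1}$ for $g \in G$. This is a group homomorphism, and $f$ is by construction a $G$-equivariant isomorphism from $\psi$ onto $\rho$, so $\rho^\sharp = \psi^\sharp = \lambda$. Moreover, for every $h \in H$ we get $\rho(i(h)) = f \circ (\psi \circ i)(h) \circ f^{-1} = \phi(h) \circ f \circ f^{-1} = \phi(h)$, hence $\rho \circ i = \phi$, as required.

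I do not expect a genuine obstacle here: the argument is just a careful "change of underlying set''. The only point that deserves to be stated precisely is that $\phi^\sharp = i^*(\lambda)$ with $\lambda \in \Lambda_G^+$ forces $\psi \circ i$ and $\phi$ to have underlying sets of equal size — this is exactly what guarantees that the transported action $\rho$ lands in $\Sym(X)$ rather than in the symmetric group of some other finite set.
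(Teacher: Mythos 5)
Your proof is correct, and it is exactly the argument the paper leaves implicit for this unproved observation: realize $\lambda\in\Lambda_G^+$ by an abstract $G$-action, note that its restriction along $i$ has the same $\sharp$-invariant as $\phi$ and hence is $H$-isomorphic to it, and transport the $G$-action to $X$ along that $H$-equivariant bijection. Your preliminary remark that the hypothesis must be read with $\lambda\in\Lambda_G^+$ (as it is in the paper's only application, where $\lambda=\mu_2$) is also the right way to interpret the statement, since $\rho^\sharp$ always lies in the non-negative cone.
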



 
\subsection*{Set of actions for a graph of groups.}
We extend notions introduced above to the setting of  graphs of groups.
Recall that
$$\calG = (\Gamma, \{G_v\}_{v\in V(\Gamma)}, \{G_e\}_{e\in E(\Gamma)}, \{i_e:G_e\to G_{t(e)}\}_{e\in E(\Gamma)})$$
is a finite graph of groups with finite vertex groups. We define the $\bbZ$-modules 
$$ \Lambda_V = \bigoplus_{v\in V(\Gamma)} \Lambda_{G_v} \quad \text{and} \quad \Lambda_E = \bigoplus_{e\in \vec{E}(\Gamma)} \Lambda_{G_e}$$
and   the respective positive cones 
$$\Lambda_V^+ = \bigoplus_{v\in V(\Gamma)} \Lambda_{G_v}^+ \quad \text{and} \quad \Lambda_E^+ = \bigoplus_{e\in \vec{E}(\Gamma)} \Lambda_{G_e}^+.$$
It will be convenient to consider the $\bbZ$-modules with the   norms
$$ \|\cdot\|_{V} = \frac{1}{|V(\Gamma)|} \sum_{v\in V(\Gamma)} \|\cdot\|_{G_v}, \quad \|\cdot\|_{E} = \frac{1}{|\vE(\Gamma)|} \sum_{e \in \vE(\Gamma)} \|\cdot\|_{G_e}$$
where $\|\cdot\|_{G_v}$ and $\|\cdot\|_{G_e}$ are the norms defined on the $\bbZ$-modules $\Lambda_{G_v}$ and $\Lambda_{G_e}$ as above.

Let $\DG : \Lambda_V \to \Lambda_E$ be the $\bbZ$-linear map defined on each direct summand $\Lambda_{G_v}$ of the $\bbZ$-module $\Lambda_V$ by
$$ (\DG)|_{{\Lambda_{G_v}}} = \sum_{e:t(e) = v} i_e^* - \sum_{e:o(e) = v}  i^*_{\bar{e}}.$$
In other words,   the image of the vector $\lambda=(\lambda_v)_v\in \Lambda_V$ under the $\bbZ$-linear map $\DG$ in each coordinate $e\in \vE(\Gamma)$ is given  by 
$$ (\DG(\lambda))_e = i^*_e (\lambda_{t(e)})- i^*_{\bar{e}} (\lambda_{o(e)}).$$

\subsection*{Actions of the group $\barpiG$  on finite sets}
Let $X$ be a fixed finite set. Given an action $\rho : \barpiG \to \Sym(X)$   we denote (abusing our previous notations)
$$\rho^\sharp \in \Lambda_V, \quad (\rho^\sharp)_v = (\rho|_{G_v})^\sharp \quad \forall v \in V.$$
Note that $\|\rho^\sharp\|_V =  |X|$. Moreover the vector $\rho^\sharp $ depends only on the restrictions of the action $\rho$ to   the vertex groups $G_v$'s but not to the free factor $F(\{s_e\})$.

\begin{proposition}
\label{prop:honest action is in kernel}
If the action $\rho : \barpiG \to \Sym(X)$   factors through $\piG $ then $\rho^\sharp \in \ker \DG$.
\end{proposition}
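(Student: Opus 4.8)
The plan is to read off the conclusion directly from the defining relations of $\piG$. Suppose $\rho : \barpiG \to \Sym(X)$ factors through $\piG$, so that $\rho$ satisfies all the relations in $R_\calG$. Fix an edge $e \in \vE(\Gamma)$; it suffices to show that the $e$-coordinate
$$ (\DG(\rho^\sharp))_e = i^*_e (\rho^\sharp_{t(e)}) - i^*_{\bar{e}} (\rho^\sharp_{o(e)}) $$
vanishes, since $e$ is arbitrary.

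First I would unwind the identifications. By definition $\rho^\sharp_{t(e)} = (\rho|_{G_{t(e)}})^\sharp$, and the pullback map satisfies $i^*_e\big((\rho|_{G_{t(e)}})^\sharp\big) = (\rho|_{G_{t(e)}} \circ i_e)^\sharp = (\rho \circ i_e)^\sharp$; similarly $i^*_{\bar{e}}(\rho^\sharp_{o(e)}) = (\rho\circ i_{\bar{e}})^\sharp$. Since the correspondence $\phi \mapsto \phi^\sharp$ is injective, the vanishing of $(\DG(\rho^\sharp))_e$ is equivalent to the two actions $\rho\circ i_e$ and $\rho\circ i_{\bar{e}}$ of the edge group $G_e$ on the finite set $X$ being isomorphic.

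The key step is that the permutation $\sigma := \rho(s_e) \in \Sym(X)$ witnesses this isomorphism. Indeed, the relation $s_e\ii i_e(g_e) s_e = i_{\bar{e}}(g_e)$ holds in $\piG$ for every $g_e \in G_e$, so applying $\rho$ yields $\sigma\ii \rho(i_e(g_e)) \sigma = \rho(i_{\bar{e}}(g_e))$ for all $g_e \in G_e$; that is, $\sigma$ intertwines the $G_e$-actions $\rho\circ i_e$ and $\rho\circ i_{\bar{e}}$. Hence these actions are isomorphic, their orbit-type decompositions coincide, and therefore $(\rho\circ i_e)^\sharp = (\rho\circ i_{\bar{e}})^\sharp$. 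This gives $(\DG(\rho^\sharp))_e = 0$ for every $e \in \vE(\Gamma)$, i.e. $\rho^\sharp \in \ker \DG$.

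There is no real obstacle here: the argument is just bookkeeping with the definitions, the only points requiring care being that $\rho\circ i_e$ is precisely $i_e^*$ applied to the restriction $\rho|_{G_{t(e)}}$, and that conjugate (hence isomorphic) actions on the same finite set have identical $\sharp$-vectors. Note that the relations $s_e = 1$ for $e \in \vE(T)$ are not used for this statement; only the conjugation relations matter.
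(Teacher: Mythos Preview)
Your proof is correct and follows essentially the same approach as the paper: fix an edge $e$, unwind the definitions to rewrite $(\DG(\rho^\sharp))_e$ as $(\rho\circ i_e)^\sharp - (\rho\circ i_{\bar e})^\sharp$, and observe that the permutation $\rho(s_e)$ conjugates one $G_e$-action to the other so that the two $\sharp$-vectors agree. The only additions you make are a slightly more explicit unpacking of $i_e^*$ and the side remark that the tree relations $s_e=1$ are not needed, neither of which departs from the paper's argument.
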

 
\begin{proof}
The $\Lambda_{G_e}$-coordinate of  the image of the vector $\rho^\sharp$ under the $\bbZ$-linear map $\DG$ for any fixed oriented edge $e\in \vE(\Gamma)$ is given by
$$ (\DG(\rho^\sharp))_e   = i^*_e ((\rho|_{G_{t(e)}})^\sharp)- i^*_{\bar{e}} ((\rho|_{G_{o(e)}})^\sharp) = (\rho\circ i_e)^\sharp - (\rho\circ i_{\bar{e}})^\sharp   \in \Lambda_{G_e}.$$
The two actions $\rho\circ i_e$ and $\rho\circ i_{\bar{e}}$ of the edge group $G_e$ on the finite set $X$ are conjugate via the permutation  $\rho(s_e)$. Therefore $  (\rho\circ i_{e})^\sharp = (\rho\circ i_{\bar{e}})^\sharp$ so  that the $\Lambda_{G_e}$-coordinate in question vanishes. This concludes the proof as the oriented edge $e \in \vE(\Gamma)$ was arbitrary.
\end{proof}

We remark that the converse of Proposition \ref{prop:honest action is in kernel} is also true, in the sense that if 
a vector $\lambda \in  \Lambda_V^+$ is in $\ker \DG$  then there exists  a finite set $Y$ with $\|\lambda\|_V = |Y|$ and  some action $\rho:\pi_1(\calG,T)\to \Sym(Y)$ such that $\rho^\sharp=\lambda$.
We will need a much sharper version of this fact proved in Proposition \ref{prop: ker implies action} below.

\begin{proposition}
\label{prop:a delta almost action gives almost kernel}
Let $\rho : \barpiG \to \Sym(X)$ be an action. If $\rho$ is a $\delta$-almost $\piG$-action then $$\|\DG (\rho^\sharp)\|_E \prec_\calG \delta \|\rho^\sharp\|_V .$$ 
\end{proposition}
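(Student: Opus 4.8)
The plan is to bound $\|\DG(\rho^\sharp)\|_E$ edge by edge, reducing to the single-edge statement: for a fixed oriented edge $e \in \vE(\Gamma)$, the quantity $\|i_e^*((\rho|_{G_{t(e)}})^\sharp) - i_{\bar e}^*((\rho|_{G_{o(e)}})^\sharp)\|_{G_e} = \|(\rho \circ i_e)^\sharp - (\rho \circ i_{\bar e})^\sharp\|_{G_e}$ is controlled by how far $\rho(s_e)$ is from conjugating the $G_e$-action $\rho \circ i_e$ to the $G_e$-action $\rho \circ i_{\bar e}$. Since $\rho$ is a $\delta$-almost $\piG$-action, the relator $r = s_e\ii i_e(g_e) s_e \cdot i_{\bar e}(g_e)\ii$ has $d_X(\rho(r), \id)$ small, summed over $g_e \in G_e$, and this says precisely that $\rho(s_e)$ \emph{almost} conjugates the one action to the other.

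The key step is then a purely finite-group lemma: if a finite group $H$ acts on $X$ via two actions $\alpha, \beta : H \to \Sym(X)$ and some permutation $\sigma \in \Sym(X)$ satisfies $\sum_{h \in H} d_X(\sigma \alpha(h)\sigma\ii, \beta(h)) < \eta$, then $\|\alpha^\sharp - \beta^\sharp\|_H \prec_H \eta |X|$. To prove this I would first replace $\sigma$ by a permutation $\sigma'$ that \emph{exactly} intertwines the two actions on a large subset: look at the set $Z = \{x \in X : \sigma\alpha(h)\sigma\ii(x) = \beta(h)(x) \ \forall h \in H\}$, which has $|X \setminus Z| \prec_H \eta|X|$; then use Observation \ref{obs: existence of invariant subsets} (applied to the $\alpha$-action, using that $\sigma\ii(Z)$ is close to $\beta$-invariance translated back, or more directly applied on both sides) to find a $\beta$-invariant $Z' \subseteq Z$ with $\sigma\ii(Z')$ being $\alpha$-invariant and $|X \setminus Z'| \prec_H \eta|X|$. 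On $Z'$ the restricted actions $\alpha|_{\sigma\ii(Z')}$ and $\beta|_{Z'}$ are isomorphic via $\sigma$, so their $\sharp$-vectors agree; hence $\alpha^\sharp - \beta^\sharp = (\alpha|_{X \setminus \sigma\ii(Z')})^\sharp - (\beta|_{X \setminus Z'})^\sharp$ by additivity, and taking norms gives $\|\alpha^\sharp - \beta^\sharp\|_H \le |X \setminus \sigma\ii(Z')| + |X \setminus Z'| \prec_H \eta|X|$, since the norm of the $\sharp$-vector of any action equals the size of the set it acts on.

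Applying this lemma with $H = G_e$, $\alpha = \rho \circ i_e$, $\beta = \rho \circ i_{\bar e}$, $\sigma = \rho(s_e)$, and $\eta = \sum_{g_e \in G_e} d_X(\rho(s_e\ii i_e(g_e) s_e), \rho(i_{\bar e}(g_e)))$, and noting that $\eta$ is exactly the $R_\calG$-contribution of the relations attached to the edge $e$ (up to the bi-invariance of $d_X$, which lets us move $\rho(s_e)$ around), we get $\|(\DG(\rho^\sharp))_e\|_{G_e} \prec_{G_e} \eta_e |X|$ where $\sum_e \eta_e \le \delta$ (the edges in $T$ contribute the relators $s_e = 1$, which only help). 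Summing over $e \in \vE(\Gamma)$, dividing by $|\vE(\Gamma)|$, and absorbing the finitely many group-dependent constants into $\prec_\calG$ yields $\|\DG(\rho^\sharp)\|_E \prec_\calG \delta |X| = \delta \|\rho^\sharp\|_V$, as desired.

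The main obstacle is the finite-group lemma — specifically, passing from the permutation $\sigma$ that \emph{almost} intertwines to an honest isomorphism of restricted actions on a controlled-size subset, keeping track of the factor $|H|$ from Observation \ref{obs: existence of invariant subsets} and making sure the subset one discards on the two sides matches up under $\sigma$. The bookkeeping with the norms $\|\cdot\|_{G_e}$ versus cardinalities, and confirming that the discarded pieces' $\sharp$-vectors have norm equal to their cardinality (so the $\ell^1$-type bound goes through), is routine but must be done carefully.
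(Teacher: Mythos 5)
Your proposal is correct and follows essentially the same route as the paper: for each edge you pass from the almost-conjugation by $\rho(s_e)$ to an exact conjugation on a large subset via Observation \ref{obs: existence of invariant subsets}, then use additivity of $\sharp$ and the fact that the norm of a restricted action's $\sharp$-vector equals the cardinality of the discarded set, and finally average over edges. Your ``finite-group lemma'' is precisely the body of the paper's proof, just packaged abstractly with $H=G_e$, $\sigma=\rho(s_e)$.
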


\begin{proof}
Fix an oriented edge $e \in \vec{E}(\Gamma)$ with $t(e) = u$ and $  o(e)=v$. 
For each group element $g \in G_e$ consider the subset
$$ X_g = \{ x \in X \: : \: \rho(s_e\ii i_e(g) s_e) (x) = \rho(i_{\bar{e}}(g)) (x) \}.$$
The assumption that $\rho$ is a $\delta$-almost  $\piG$-action implies that  $|X_g| \ge (1-\delta)|X| $. 
Denote  $X_e = \bigcap_{g \in G_e} X_g  $ so that  $|X_e| \ge (1-\delta|G_e|)|X|$ and
$$ \rho(s_e\ii i_e(g) s_e)(x) = \rho(i_{\bar{e}}(g)) (x) \quad \forall x \in X_e, g \in G_e.$$
According to Observation \ref{obs: existence of invariant subsets} there is some  $i_{\bar{e}}(G_e)$-invariant subset $Y_e \subset X_e$ satisfying $|Y_e| \ge (1- \delta |G_e|^2) |X|$. Note that the set $\rho(s_e)(Y_e)$ is $i_e(G_e)$-invariant. Moreover, the two actions $(\rho \circ i_e)\restrict _{\rho(s_e)(Y_e)}$ and $(\rho \circ i_{\bar{e}})\restrict _{Y_e}$ of the group $G_e$ are isomorphic (via conjugation by the permutation $\rho(s_e)$).

To simplify our notations let $\rho_u = \rho|_{G_u}$ and $\rho_v = \rho|_{G_v}$ for the remainder of this proof. The previous paragraph implies that
$$((\rho_u \circ i_e)  \restrict_{\rho(s_e)(Y_e)})^\sharp = ((\rho_v \circ i_{\bar{e}})  \restrict_{ Y_e})^\sharp.$$
The norm of the   coordinate of the vector $\DG(\rho^\sharp)$ corresponding to the edge $e$ is given by
\begin{align*}
\begin{split}
    \|(\DG (\rho^\sharp)_e \|_{G_e} &= \|i_e^* (\rho_u^\sharp) - i_{\bar{e}}^*(\rho_v^\sharp) \|_{G_e} \\ 
    &=  \| ((\rho_u \circ i_e)  \restrict_{\rho(s_e)(Y_e)})^\sharp + ((\rho_u \circ i_e)  \restrict_{X \setminus \rho(s_e)(Y_e)})^\sharp \\ & \quad \quad \quad  \quad - ((\rho_v \circ i_{\bar{e}})  \restrict_{ Y_e})^\sharp - ((\rho_v \circ i_{\bar{e}})  \restrict_{X \setminus Y_e})^\sharp \|_{G_e} \\
    &\le \| i_e^*(\rho_u\restrict_{X - \rho(s_e)(Y_e)})^\sharp ) \|_{G_e} + \|i^*_{\bar{e}}(\rho_v\restrict_{X - Y_e})^\sharp )\|_{G_e}  \\
    &= |X-s_e(Y_e)| + |X-Y_e|  \le 2\delta|G_e|^2|X|.
\end{split}
\end{align*}
Averaging the above estimate over all oriented edges $ e\in \vec{E}(\Gamma)$ gives 
$$\|\DG (\rho^\sharp) \|_E \le c\delta|X|= c\delta\|\rho^\sharp\|_V$$ 
with respect to the  constant  $c=2\max_{e\in\vE(\Gamma)} |G_e|^2$. This constant  depends only on the graph of groups $\calG$.
\end{proof}




\section{Linear algebra and cones}
\label{sec: linear cones}

This section is, formally speaking, independent of the rest of the paper. Its goal is to show that \enquote{$\bbZ$-linear maps are stable}, in the sense that an approximate solution to a system of linear equations and inequalities must be close to an exact $\bbZ$-solution (see Lemma \ref{lem:linear correction to a smaller vector} below for a precise statement).

\vspace{10pt}

Let $\Lambda_1$ and $\Lambda_2$ be a pair of finitely generated free $\bbZ$-modules. Let $\DD : \Lambda_1 \to \Lambda_2$ be a $\bbZ$-linear map.

Let $V_i = \Lambda_i \otimes \bbR$ be the $\bbR$-vector spaces obtained by extending scalars from $\Lambda_i$ and  $\|\cdot \|_i$ be norms on $V_i$ for $i = 1,2$. By abuse of notation, we continue using $\DD : V_1 \to V_2$ to denote  the $\bbR$-linear extension of $\DD : \Lambda_1 \to \Lambda_2$. 
We will make essential use of the fact that $\DD : V_1 \to V_2$ is defined over $\bbQ$.
Denote $K = \ker \DD$ so that $K $ is a  $\bbQ$-subspace of the $\bbR$-vector space $V_1$.

Assume that $C \subset V_1$ is a  closed positive cone defined by finitely many   inequalities over $\bbQ$ and satisfying  $\Span(C) = V_1$. 
Denote $\Lambda_1^+ = C \cap \Lambda_1$ so that the  subset $\Lambda_1^+$ is closed under addition.

\begin{lemma}\label{lem: approximating kernel in cones}
For all $v\in C$ there exists $v''\in C\cap K$ such that $\|v-v''\|_1 \prec \|\DD v\|_2$.
\end{lemma}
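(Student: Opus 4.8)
The plan is to first solve the problem over $\bbR$ and then correct back into the lattice. Since $\DD$ is defined over $\bbQ$, the kernel $K$ is a $\bbQ$-subspace, and the cone $C$ is cut out by finitely many rational linear inequalities; hence $C \cap K$ is a rational polyhedral cone. I would begin by establishing a \emph{real} version of the statement: there is a constant $M$ depending only on $\DD$, the norms, and $C$ such that for every $v \in C$ there exists $v' \in C \cap K$ with $\|v - v'\|_1 \le M \|\DD v\|_2$. To get this, note that $\DD$ restricted to a complement of $K$ is a linear isomorphism onto its image $\DD(V_1)$, so there is a constant with $\mathrm{dist}(v, K) \prec \|\DD v\|_2$ for all $v$; let $w \in K$ realize (up to a factor of $2$, say) this distance, so $\|v - w\|_1 \prec \|\DD v\|_2$. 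The issue is that $w$ need not lie in the cone $C$. To fix this I would use that $\Span(C) = V_1$, so $C$ has nonempty interior; pick a fixed point $p$ in the relative interior of $C \cap K$ (which is nonempty and spans $K$ because $C$ spans $V_1$ and $K$ is rational — more carefully, one shows $C \cap K$ has nonempty interior \emph{within} $K$), and set $v' = w + t(p - w)$ for a suitable small $t$, or more robustly $v' = (1-t) w + t \cdot (\|v\|\text{-scaled } p)$. A cleaner route: since $C$ is a neighborhood of its interior points, once $\|v - w\|_1$ is small relative to $\|v\|_1$ the point $w$ is already within a controlled multiple of the distance of $C$, and one pushes $w$ into $C$ along the direction of $p$, paying only a constant multiple of $\|\DD v\|_2$; when $\|v - w\|_1$ is \emph{not} small relative to $\|v\|_1$ — i.e. $\|\DD v\|_2 \succ \|v\|_1$ — the bound is trivial by taking $v'$ to be any fixed ray point of $C \cap K$ scaled to norm $\|v\|_1$.

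\textbf{From real to integral.} Having produced $v' \in C \cap K$ with $\|v - v'\|_1 \prec \|\DD v\|_2$ for $v \in C$ (a fortiori for $v \in \Lambda_1^+$), I would round $v'$ to a lattice point of $\Lambda_1$ lying in $C \cap K$. Here is where rationality of $C \cap K$ is essential: $C \cap K$ is a rational polyhedral cone, so by Minkowski--Weyl it is finitely generated by rational (hence, after clearing denominators, integral) rays $u_1, \dots, u_k \in \Lambda_1^+ \cap K$. Write $v' = \sum_j c_j u_j$ with $c_j \ge 0$, and set $v'' = \sum_j \lfloor c_j \rfloor u_j \in \Lambda_1^+ \cap K = C \cap K \cap \Lambda_1$. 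Then $\|v' - v''\|_1 \le \sum_j \{c_j\} \|u_j\|_1 \le \sum_j \|u_j\|_1 =: M'$, a constant depending only on the cone. Hence $\|v - v''\|_1 \le \|v - v'\|_1 + M' \prec \|\DD v\|_2 + M'$. This is \emph{not} quite the claim: the additive constant $M'$ must be absorbed. I absorb it by the same trivial-case dichotomy as before: if $\|\DD v\|_2 \ge 1$ then $M' \le M' \|\DD v\|_2$ and we are done; if $\|\DD v\|_2 < 1$, then since $\DD v \in \Lambda_2$ (as $v \in \Lambda_1$) and the norm $\|\cdot\|_2$ is bounded below on nonzero lattice vectors by some constant $\kappa > 0$, we must have $\DD v = 0$, i.e. $v \in K$; then $v \in C \cap K \cap \Lambda_1$ already and we take $v'' = v$, giving $\|v - v''\|_1 = 0$. (Strictly one needs $\|\cdot\|_2$ bounded below by $\kappa$ on $\Lambda_2 \setminus \{0\}$, which holds since $\Lambda_2$ is discrete; replace the threshold $1$ by $\kappa$ throughout.)

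\textbf{Main obstacle.} The delicate point is the real-valued step: pushing the nearest kernel vector $w$ into the cone $C$ while controlling the cost by $\|\DD v\|_2$ rather than by $\|v\|_1$. The naive estimate would only give $\mathrm{dist}(w, C) \prec \|v - w\|_1 \prec \|\DD v\|_2$ if $w$ were already \emph{near} $C$, which it is — because $v \in C$ and $\|v-w\|_1$ is exactly the quantity we control — so in fact $\mathrm{dist}(w, C \cap K)$: one wants that a point of $K$ within distance $d$ of the cone $C$ is within distance $O(d)$ of $C \cap K$. This is a statement about the "angle" between $K$ and the boundary of $C$, and it is where $\Span(C) = V_1$ (equivalently, $C$ has nonempty interior, so $C \cap K$ has nonempty interior in $K$) is used; it can be proved by a compactness argument on the unit sphere of $K$, or by the explicit polyhedral description. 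Everything else — the complement-of-kernel isomorphism giving $\mathrm{dist}(v,K) \prec \|\DD v\|_2$, the lattice rounding via finitely many integral generators, and the two trivial-case dichotomies handling additive constants — is routine.
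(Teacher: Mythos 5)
Your outline has the right first half (the quotient-norm/complement argument giving a $w\in K$ with $\|v-w\|_1\prec\|\DD v\|_2$ is exactly how the paper starts), but two things go wrong after that. First, a misreading: the lemma only asks for $v''\in C\cap K$, a point of the \emph{real} cone inside the kernel, so your whole ``From real to integral'' section answers a different question (it is essentially the paper's later lemmas about $\Lambda^+\cap K$, where an unavoidable additive constant $A$ appears), and its concluding dichotomy moreover uses $\DD v\in\Lambda_2$, i.e.\ $v\in\Lambda_1$, which is not a hypothesis here. Second, and more seriously, in the ``real version'' — which is the actual content of the lemma — the mechanism you propose for pushing $w$ into $C$ rests on the claim that $C\cap K$ has nonempty interior within $K$ (equivalently spans $K$) because $\Span(C)=V_1$. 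That is false: take $V_1=\bbR^2$, $C$ the positive quadrant, $\DD(x,y)=x+y$, so $K$ is the antidiagonal; then $C$ spans $V_1$ but $C\cap K=\{0\}$, and there is no relative-interior point $p$ to push toward (the lemma still holds there, but not by your argument).

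What your proof actually needs is exactly the error bound you isolate at the end: for $w\in K$, $\mathrm{dist}(w,C\cap K)\prec\mathrm{dist}(w,C)$. This is true, but it is the crux of the lemma, and you assert it rather than prove it; moreover the ``compactness argument on the unit sphere of $K$'' you suggest cannot suffice as stated, since the ratio $\mathrm{dist}(w,C\cap K)/\mathrm{dist}(w,C)$ degenerates near points of $C\cap K$ on the sphere, and the bound genuinely fails for non-polyhedral closed cones (e.g.\ a round cone and a tangent hyperplane through a boundary ray), so any correct proof must exploit that $C$ is cut out by finitely many inequalities. That is precisely what the paper supplies: after the same quotient-norm step it takes the last point $u$ of the segment $[v,w]$ lying in $C$, observes $\|v-u\|_1\le\|v-w\|_1$ and $\|\DD u\|_2\le\|\DD v\|_2$, and then runs an induction on dimension over the proper face of $C$ containing $u$. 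If you replace your interior-point/compactness heuristics by that face induction, or by an explicit appeal to a Hoffman-type error bound for the polyhedral set $C\cap K$, your outline becomes a correct (and essentially equivalent) proof; as written, the central quantitative step is unproved and the supporting claims offered for it are partly wrong.
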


We point out that the intersection $C \cap K$ is non-empty for it contains the zero vector $0 \in V_1$.
 Lemma \ref{lem: approximating kernel in cones} does not require the assumption   that the subspace $K$,  the linear map $\DD $ and the positive cone $C$ are all defined over $\bbQ$. We do need however the assumption that $C$ is defined by finitely many inequalities.

\begin{proof}[Proof of Lemma \ref{lem: approximating kernel in cones}]
 We argue by induction on the $\bbR$-dimension of $V_1$. The base case where $\dim_\bbR V_1 = 0$ is trivial.

Consider the $\bbR$-subspace $\DD(V_1)$  of the $\bbR$-vector space equipped  with two different norms, namely the restriction of norm $\|\cdot \|_2$ coming from $V_2$, and the quotient norm $\|\cdot\|'_1$ defined by
$$\|\DD v\|_1' := \inf_{w\in K} \|v-w\|_1 \quad \forall v \in V_1.$$ 
Since any two norms on a finite dimensional $\bbR$-vector space are bi-Lipschitz equivalent, there is some constant $c>0$ such that $\|\DD v\|_1' \le c \|\DD v\|_2$ for all $v \in V_1$. 

Fix some vector $v \in C$. The infimum appearing in the definition of the quotient norm $\|\DD v\|_1'$ is attained at some vector $w\in K$, hence 
$$\|\DD v\|'_1 = \|v-w\|_1 \le c \|\DD v\|_2.$$

If $w\in C$, then we are done by choosing $v'=w \in C \cap K$. 
Otherwise, let $u$ be the closest point to $w$  along the closed segment $[v,w] \subseteq V_1$ and belonging to the closed cone $C$. Then clearly 
$$\|v-u\|_1\le \|v-w\|_1 = \|\DD v\|'_1 \le c\|\DD v\|_2 \quad \text{and} \quad  \|\DD u\|_2 \le \|\DD v\|_2.$$ 
Since the point $u$ lies on the boundary of the positive cone $C$, it belongs to some proper face $D\subset C$ spanning a $\bbQ$-subspace $U_1 = \Span_\bbR(D) \subset V_1$ of strictly lower dimension. 
By the induction hypothesis there exists some constant $c_D$, such that for $u\in D$ there exists a point $v' \in D \cap K$ with $\|u-v'\|_1 \le c_D \|\DD u\|_2$. 
Hence, $$\|v-v'\|_1 \le  \|v-u\|_1 + \|u - v'\|_1 \le  (c+c_D)\|\DD v\|_2 \le c_1 \|\DD v\|_2$$
where $c_1 = c+\max_{D\subset C}c_D$ and the maximum is taken over the finite set of proper faces of the positive cone $C$.
\end{proof}

Recall that $K$ denotes the kernel of the linear map $\DD$ regarded as a $\bbQ$-subspace of the $\bbR$-vector space $V_1$. 

\begin{lemma}
\label{lem:affine bound}
There are constants $c_1,A > 0$ such that if $v \in C$ then there is a vector $\lambda \in \Lambda^+ \cap K $  satisfying $\|v-\lambda\|_1 \le c_1\|\DD v\|_2 + A$.
\end{lemma}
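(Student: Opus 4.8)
\emph{Proof proposal.} The plan is to upgrade Lemma \ref{lem: approximating kernel in cones} from a real-valued conclusion to an integral one by a rounding argument, at the cost of an additive error. First I would invoke Lemma \ref{lem: approximating kernel in cones} to produce a vector $v'' \in C \cap K$ with $\|v - v''\|_1 \le c\,\|\DD v\|_2$ for a suitable constant $c$. The remaining task is to approximate the real point $v''$ by an integral point of $C \cap K$, up to a distance bounded independently of $v''$.

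For this, the key observation is that $C \cap K$ is a \emph{rational} polyhedral cone: by hypothesis $C$ is defined by finitely many inequalities over $\bbQ$, and $K = \ker \DD$ is the solution set of finitely many linear equations over $\bbQ$, so their intersection is cut out by finitely many rational linear (in)equalities. By the Minkowski--Weyl theorem such a cone is finitely generated, and after clearing denominators I may choose generators $g_1,\dots,g_m \in \Lambda_1 \cap K$ with
$$ C \cap K = \Big\{ \textstyle\sum_{i=1}^m t_i g_i \ :\ t_1,\dots,t_m \ge 0 \Big\}. $$
Then, writing $v'' = \sum_i t_i g_i$ with all $t_i \ge 0$, I would set $\lambda = \sum_i \lfloor t_i \rfloor g_i$. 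The coefficients $\lfloor t_i \rfloor$ are non-negative integers, so $\lambda \in \Lambda_1 \cap K$ lies in the cone, i.e.\ $\lambda \in \Lambda^+ \cap K$, while
$$ \|v'' - \lambda\|_1 = \Big\| \textstyle\sum_{i=1}^m (t_i - \lfloor t_i \rfloor)\, g_i \Big\|_1 \le \sum_{i=1}^m \|g_i\|_1 =: A. $$
A final application of the triangle inequality then yields $\|v - \lambda\|_1 \le \|v - v''\|_1 + \|v'' - \lambda\|_1 \le c\,\|\DD v\|_2 + A$, so the statement holds with $c_1 = c$ and this $A$ (enlarged to be positive if need be).

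The only point that requires care — and the one place where the rationality of $\DD$ and of $C$ enters, in contrast with Lemma \ref{lem: approximating kernel in cones}, which did not use it — is the passage from the cone $C \cap K$ to a finite set of \emph{integral} generators; everything else is formal bookkeeping. One could alternatively phrase this last step via Gordan's lemma applied to the monoid $(C \cap K) \cap \Lambda_1$, but the cone-generator version above makes the additive constant $A$ transparent.
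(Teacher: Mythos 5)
Your argument is correct, but it handles the key step --- passing from the real point $v''\in C\cap K$ to a nearby integral point --- differently from the paper. Both proofs begin identically, invoking Lemma \ref{lem: approximating kernel in cones} to get $v''\in C\cap K$ with $\|v-v''\|_1\le c\,\|\DD v\|_2$. You then use that $C\cap K$ is a rational polyhedral cone, apply Minkowski--Weyl to obtain integral generators $g_1,\dots,g_m\in\Lambda_1\cap C\cap K$, and round down the coefficients of a conical representation of $v''$, giving the explicit constant $A=\sum_i\|g_i\|_1$; this is sound, since $\lambda=\sum_i\lfloor t_i\rfloor g_i$ lies in $\Lambda^+\cap K$ (using that $\Lambda^+$ is closed under addition) and the fractional parts contribute at most $\sum_i\|g_i\|_1$. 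The paper instead argues via lattice points: since $C\cap K$ is defined over $\bbQ$, its span $U=\Span_\bbR(C\cap K)$ is a rational subspace, so $\Lambda_1\cap U$ is a full lattice in $U$; the truncated cone $C\cap U\cap B_A$ has large relative interior for $A$ large, hence surjects onto the torus $U/(U\cap\Lambda_1)$, and therefore the translate $v''+(C\cap U\cap B_A)\subseteq C\cap K$ contains a point of $\Lambda^+\cap K$ within distance $A$ of $v''$. Your route is more constructive about $A$ and rests on standard polyhedral combinatorics (Minkowski--Weyl, or equivalently a Gordan-type generation statement), whereas the paper's route avoids any generator decomposition and uses only rationality of the span together with a pigeonhole-style lattice argument; given the standing hypothesis that $C$ is cut out by finitely many rational inequalities, both are fully justified.
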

\begin{proof}
 Let $v \in C$ be any vector.  By Lemma \ref{lem: approximating kernel in cones} there exists a vector $v''\in C \cap K$ such that $\|v-v''\|_1 \le c_1  \|\DD  v\|_2$ for some constant $c_1 > 0$ independent of $v$.
 
 Note that $C \cap K$ is a positive cone defined over $\bbQ$. Let $U = \Span_\bbR(C \cap K) \subseteq V_1$ so that $C\cap U$ is a closed cone with a non-empty interior in the vector subspace $U$.  Denote $B_A = \{w \in V_1 \: : \: \|w\|_1 \le A\}$. Therefore   $C \cap U \cap B_A$ contains in its interior a ball in $U$ of an arbitrary large radius, provided the radius  $A>0$ is sufficiently large. Since $\Lambda \cap U$ is a lattice in the $\bbR$-vector space $U$,  the set $C\cap U \cap B_A$ surjects onto $U/(U \cap \Lambda)$ for all  $A > 0$ sufficiently large. Fix any  sufficiently large such  $A>0$.

Since $v''\in C \cap K$ the translated set $v''+C\cap U \cap B_A \subset C \cap K$ also surjects onto $U/(U \cap \Lambda)$. In particular this set admits a point    $\lambda\in \Lambda \cap C \cap K = \Lambda^+ \cap K$.
We conclude that $\|v-\lambda\|_1 \le \|v-v''\|_1 + \|v'' - \lambda\|_1 \le c_1 \|\DD  v\|_2 + A$ as required.
\end{proof}

\begin{lemma}
\label{lem:linear correction to a smaller vector}
For any vector $\lambda \in \Lambda^+$ there is another vector $\lambda' \in \Lambda^+ \cap K$   satisfying $\|\lambda-\lambda'\|_1 \prec \|\DD  \lambda\|_2 $ and $\|\lambda'\|_1 \le \|\lambda\|_1$.
\end{lemma}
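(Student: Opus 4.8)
Looking at Lemma~\ref{lem:linear correction to a smaller vector}, I need to combine the affine bound of Lemma~\ref{lem:affine bound} with a scaling/rounding trick to remove the additive constant $A$ and, simultaneously, arrange the norm inequality $\|\lambda'\|_1 \le \|\lambda\|_1$.

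\begin{proof}[Proof proposal for Lemma~\ref{lem:linear correction to a smaller vector}]
The plan is to leverage the affine estimate of Lemma~\ref{lem:affine bound} together with a homogeneity argument: since the situation is a cone, I can scale $\lambda$ up, apply the affine bound, and scale back down to kill the additive constant $A$. Concretely, fix $\lambda\in\Lambda^+$ and let $N\in\bbN$ be a large integer to be chosen. Apply Lemma~\ref{lem:affine bound} to the vector $N\lambda\in C$ to obtain $\mu\in\Lambda^+\cap K$ with $\|N\lambda-\mu\|_1\le c_1\|\DD(N\lambda)\|_2+A = N c_1\|\DD\lambda\|_2+A$. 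Then $\frac{1}{N}\mu$ is a vector in $(C\cap K)$ (a $\bbQ$-cone) at distance at most $c_1\|\DD\lambda\|_2 + A/N$ from $\lambda$; but $\frac{1}{N}\mu$ need not be integral. The idea is to choose $N$ large enough (depending only on the module, not on $\lambda$) and then correct $\frac1N\mu$ back into the lattice $\Lambda\cap K$ inside the cone — using the same surjectivity-onto-$U/(U\cap\Lambda)$ mechanism from the proof of Lemma~\ref{lem:affine bound}, where $U=\Span_\bbR(C\cap K)$ — losing only an $O(1)$ term which, after the division by $N$, becomes $O(1/N)$ and is absorbed. This produces $\lambda'\in\Lambda^+\cap K$ with $\|\lambda-\lambda'\|_1\prec\|\DD\lambda\|_2$, giving the first assertion.

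For the second assertion $\|\lambda'\|_1\le\|\lambda\|_1$, I would exploit the freedom in the correction step: among the lattice points of $\Lambda\cap C\cap K$ within bounded distance of $\frac1N\mu$, I want to select one whose norm does not exceed $\|\lambda\|_1$. Here is where the specific structure of $\|\cdot\|_1$ matters — in our application it is an $\ell^1$-type norm (a non-negative combination of absolute values of coordinates) and $C=\Lambda_1^+$ is the non-negative orthant, so on the cone the norm is \emph{linear}: $\|v\|_1 = \ell(v)$ for a fixed linear functional $\ell\ge 0$ on $C$. Thus I can incorporate the constraint $\ell(v)\le\|\lambda\|_1$, i.e. $\ell(v)=\|\lambda\|_1$ after adjusting, as one more affine equation; alternatively, since $\ell(\lambda') $ and $\ell(\lambda)$ are integers (or lie in a fixed discrete set) once scaled, and $|\ell(\lambda')-\ell(\lambda)|\prec\|\DD\lambda\|_2$, I can subtract from $\lambda'$ a suitable non-negative lattice vector in $C\cap K$ of controlled norm to bring $\ell(\lambda')$ down to exactly $\ell(\lambda)$ without leaving $\Lambda^+\cap K$ — such a correction vector exists provided $C\cap K$ has a lattice point with positive $\ell$-value, which holds unless $C\cap K\subseteq\ker\ell$, a degenerate case one checks separately (there $\|\lambda'\|_1$ is automatically controlled). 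The cleanest route is probably to run the cone-surjectivity argument of Lemma~\ref{lem:affine bound} inside the affine hyperplane $\{\ell = \|\lambda\|_1\}$ intersected with $C\cap K$, which is again a rational polyhedron, so that every lattice point produced automatically satisfies $\|\cdot\|_1=\|\lambda\|_1$.

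I expect the main obstacle to be the second assertion rather than the first: making the correction land on \emph{or below} the prescribed norm level while staying in the integral points of the cone $\cap K$, uniformly in $\lambda$. The first assertion is essentially a formal consequence of Lemma~\ref{lem:affine bound} by the scaling trick. The delicate point for the norm bound is ensuring the relevant intersection $\{\ell=\|\lambda\|_1\}\cap C\cap K$ is nonempty and large enough to carry out the lattice-surjectivity step; this should follow because $\lambda$ itself, or rather $\frac1N\mu$, already lies near that level set and the cone $C\cap K$ is full-dimensional in its span, but one must be careful that $\|\lambda\|_1$ can be an arbitrarily large real parameter while the ``thickness'' needed for surjectivity onto $U/(U\cap\Lambda)$ is a fixed constant $A$ — which is fine since we only need a fixed-radius ball inside the level set, and that is available once $\|\lambda\|_1$ exceeds a fixed threshold, the bounded remaining cases being trivial. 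I would organize the write-up as: (i) scaling + rounding to get the approximation with a multiplicative constant; (ii) a final "norm-reduction" move subtracting a bounded non-negative element of $\Lambda^+\cap K$ to hit the exact norm level, handling the degenerate case $C\cap K\subseteq\ker\ell$ by noting it forces $\|\lambda'\|_1=\|\lambda\|_1$ for free.
\end{proof}
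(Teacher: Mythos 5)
There is a genuine gap in the part you describe as ``essentially a formal consequence'' of Lemma~\ref{lem:affine bound}. The scaling trick does not remove the additive constant $A$: after applying Lemma~\ref{lem:affine bound} to $N\lambda$ and dividing by $N$, the vector $\tfrac1N\mu$ still has to be rounded to a point of $\Lambda^+\cap K$, and that rounding is performed at scale $1$, not at scale $N$ --- its cost is a fixed additive constant (essentially the covering radius of $\Lambda\cap U$ in $U=\Span_\bbR(C\cap K)$, plus whatever is needed to stay inside the cone), not $O(1/N)$. So no choice of $N$ helps, and your argument only reproduces the affine estimate $\|\lambda-\lambda'\|_1\le c_1\|\DD\lambda\|_2+O(1)$ you started from. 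In fact no purely homogeneous argument can work, because your proposal never uses the integrality of $\lambda$ itself: for a real vector $v\in C\setminus K$ with $\|\DD v\|_2$ arbitrarily small, the multiplicative bound fails (e.g.\ $\Lambda_1=\bbZ^2$, $C$ the nonnegative orthant, $\DD(x,y)=x-y$, $v=(\tfrac12,\tfrac12+\epsilon)$: the distance to $\Lambda^+\cap K=\{(n,n):n\ge0\}$ stays bounded away from $0$ while $\|\DD v\|_2=\epsilon\to0$). The missing ingredient --- and the one the paper uses --- is the discreteness gap coming from rationality: since $\DD$ is defined over $\bbQ$, there is $M>0$ with $\|\DD\lambda\|_2\ge M$ for every $\lambda\in\Lambda\setminus K$; hence in the nontrivial case any additive constant is absorbed via $c_1\|\DD\lambda\|_2+A\le(c_1+A/M)\|\DD\lambda\|_2$, while the case $\lambda\in K$ is handled by $\lambda'=\lambda$.

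Your treatment of the norm condition is also heavier than necessary and incompletely justified. The paper obtains $\|\lambda'\|_1\le\|\lambda\|_1$ for free, with no linearity assumption on the norm, by applying Lemma~\ref{lem:affine bound} not to $\lambda$ but to the shrunken vector $v=(1-\theta)\lambda$ with $\theta=(c_1\|\DD\lambda\|_2+A)/\|\lambda\|_1$ (taking $\lambda'=0$ when $\theta\ge1$): then $\|\lambda'\|_1\le\|v\|_1+\|v-\lambda'\|_1\le(1-\theta)\|\lambda\|_1+c_1\|\DD\lambda\|_2+A=\|\lambda\|_1$, and the triangle inequality together with the gap $M$ gives $\|\lambda-\lambda'\|_1\le2(c_1+A/M)\|\DD\lambda\|_2$. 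By contrast, your route through the level set $\{\ell=\|\lambda\|_1\}$, or through subtracting a nonnegative lattice vector of $C\cap K$, uses the specific $\ell^1$-on-the-orthant structure (not assumed in this section, which is stated for arbitrary norms and rational cones) and requires that the subtraction stay in $C$ and that the level set meet $C\cap K$ in a sufficiently thick rational polyhedron --- points you flag but do not resolve. If you add the gap-$M$ observation and the shrinking-by-$(1-\theta)$ device, both difficulties disappear.
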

\begin{proof}
Let the vector  $\lambda  \in \Lambda^+$ be fixed. If $\lambda  \in K = \ker \DD $ then there is nothing to prove, for we may simply take $\lambda ' = \lambda \in \Lambda^+ \cap K$. Assume therefore that $\lambda \notin K$.

Since the linear map $\DD $ is defined over $\bbQ$ there is a constant $M > 0$ such that $\|\DD \lambda \|_2 \ge M$ for every vector $\lambda  \in \Lambda \setminus K $.    
Denote $$\theta = \frac{c_1\|\DD \lambda \|_2+A}{\|\lambda\|_1}$$ 
where the constants $c_1$ and $A$ are as in Lemma \ref{lem:affine bound}.

If $\theta \ge 1$ then we may take $\lambda'=0$. This vector  $\lambda'$ satisfies  $0 = \|\lambda'\|_1  \le \|\lambda\|_1$ and
$$   \|\lambda - \lambda'\|_1 \le \theta  \|\lambda\|_2 = c_1\|\DD\lambda\|_2+A \le (c_1 +\frac{A}{M})\|\DD \lambda\|_2$$
as desired (the constants $c_1, A$ and $M$ are all independent of $\lambda$).

Finally assume  that $0<\theta<1$. Apply Lemma \ref{lem:affine bound} to the vector $v=(1-\theta) \lambda $. This gives a new vector $\lambda ' \in \Lambda^+ \cap K $ with 
$$\|v -\lambda '\|_1  \le c_1   \|\DD v\|_2 + A \le c_1\|\DD \lambda \|_2 + A.$$
Therefore
$$ \| \lambda ' \|_1 \le \|  v \|_1 + \| v  - \lambda '\|_1 \le  (1-\theta)\|\lambda \|_1 + c_1 \|\DD \lambda \|_2 + A =  \|\lambda \|_1. $$
This verifies the second condition. As for the first condition,  we have
$$ \|\lambda -\lambda '\|_1 \le \|\lambda -v \|_1 + \|v  - \lambda '\|_1 \le \theta\|\lambda \|_1 + c_1\|\DD \lambda \|_2+A \le 2c_1 \|\DD \lambda \|_2+2A \le 2(c_1 + A/M) \|\DD \lambda\|_2.$$
This concludes the proof, noting as above that the constants $c_1, A$ and $M$ are all independent of the vector $\lambda$.
\end{proof}

\section{From linear algebra back to actions}
\label{sec: from linear algebra to actions}

We show that any $\delta$-almost $\piG$-action $\rho$ whose isomorphism type $\rho^\sharp$ is compatible with some $\piG$-action, can be corrected to such an action. More precisely, we establish the following result, using without further mention all the notations introduced in \S\ref{sec: relative stability}, \S\ref{sec:fundamental group} and \S\ref{sec: from actions to linear algebra}.

\begin{proposition}\label{prop: ker implies action} 
Let   $\rho : \barpiG \to \Sym(X) $ be a $\delta$-almost $\piG$-action with $\lambda = \rho^\sharp$. Let $\lambda' \in\Lambda_V^+$ be any vector with $\|\lambda'\|_V = \|\lambda\|_V$. If  
\begin{enumerate}[label=(\alph*)]
    \item $\lambda' \in \ker \DG$ and
    \item $\|\lambda - \lambda' \|_V \le \delta \|\lambda\|_V$ 
\end{enumerate}
then there is a group action $\rho' : \piG \to \Sym(X) $ satisfying
\begin{enumerate}[label=(\roman*)]
\item $\lambda' = (\rho')^\sharp  $ and 
\item $d_{X,S_\calG}(\rho,\rho') \prec_\calG \delta$. 
\end{enumerate}
\end{proposition}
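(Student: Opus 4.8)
The plan is to build $\rho'$ on $X$ in three stages: first fix up the vertex-group restrictions so that their isomorphism types match $\lambda'$; then, edge by edge, modify the ``stable letters'' $\rho(s_e)$ so that the two edge-group actions are literally conjugate by $\rho'(s_e)$; and finally check that every relation in $R_\calG$ is satisfied exactly, so that $\rho'$ descends to $\piG$, and that at each stage only an $O_\calG(\delta)$-fraction of $X$ was touched. The starting point is the observation that $\lambda$ and $\lambda'$ agree on a large part of $X$: since $\|\lambda-\lambda'\|_V\le\delta\|\lambda\|_V$, for each vertex $v$ the actions $\rho|_{G_v}$ and any action with isomorphism type $\lambda'_v$ differ, after discarding a $G_v$-invariant piece of relative size $\prec_\calG\delta$ (using Observation~\ref{obs: existence of invariant subsets}), on disjoint unions of orbits of total size $\prec_\calG\delta|X|$. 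Concretely, for each $v$ one chooses a $G_v$-invariant subset $X_v\subseteq X$ with $|X\setminus X_v|\prec_\calG\delta|X|$ on which $\rho|_{G_v}$ already ``looks like'' a sub-action of type $\lambda'_v$, discards the complement, and reglues an arbitrary $G_v$-action of the missing isomorphism type $\lambda'_v-(\rho|_{X_v})^\sharp$ on the leftover points; since $\|\lambda'\|_V=\|\lambda\|_V=|X|$ the bookkeeping closes and we get $\rho'_v:G_v\to\Sym(X)$ with $(\rho'_v)^\sharp=\lambda'_v$ and $d_X(\rho_v,\rho'_v)\prec_\calG\delta$. This is essentially Observation~\ref{obs: extending actions from subgroups} applied ``relatively'', on the large common part.

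Next, for each oriented edge $e\in\vec E(\Gamma)$ with $t(e)=u$, $o(e)=v$: by construction $(\DG\lambda')_e=i_e^*(\lambda'_u)-i_{\bar e}^*(\lambda'_v)=0$, so the two $G_e$-actions $\rho'_u\circ i_e$ and $\rho'_v\circ i_{\bar e}$ on $X$ have the same isomorphism type, hence are conjugate by some permutation $\tau_e\in\Sym(X)$; set $\rho'(s_e)=\tau_e$ for $e\in\vec E(\Gamma)\setminus\vec E(T)$ and $\rho'(s_e)=\id$ for $e\in\vec E(T)$ (consistency along $T$ is arranged by choosing the $\tau_e$'s compatibly, or equivalently by first conjugating all vertex actions into a common frame along the spanning tree $T$, which costs nothing). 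The key point is that $\tau_e$ can be chosen $d_X$-close to $\rho(s_e)$: on the large subset $\rho(s_e)(Y_e)\cap X$ produced in the proof of Proposition~\ref{prop:a delta almost action gives almost kernel} the permutation $\rho(s_e)$ already conjugates $\rho_v\circ i_{\bar e}$ to $\rho_u\circ i_e$ up to the $O_\calG(\delta)$-error coming from having replaced $\rho_v,\rho_u$ by $\rho'_v,\rho'_u$; so one takes $\tau_e$ to agree with $\rho(s_e)$ there and extends it over the remaining $\prec_\calG\delta|X|$ points by any equivariant bijection between the two leftover pieces (which have equal size by the isomorphism-type match). This gives $d_X(\rho(s_e),\tau_e)\prec_\calG\delta$ for each of the finitely many edges.

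Putting these together defines $\rho'$ on the generating set $S_\calG$; by construction it satisfies every relation of $R_\calG$ exactly ($s_e=1$ on $T$, and $s_e^{-1}i_e(g)s_e=i_{\bar e}(g)$ because $\tau_e$ was chosen to conjugate one action to the other on all of $X$), so $\rho'$ factors through an honest action $\piG\to\Sym(X)$ with $(\rho')^\sharp=\lambda'$, giving (i). For (ii), $d_{X,S_\calG}(\rho,\rho')=\sum_{v}\sum_{g\in G_v}d_X(\rho(g),\rho'(g))+\sum_{e}d_X(\rho(s_e),\rho'(s_e))$ is a finite sum of terms each $\prec_\calG\delta$, so the total is $\prec_\calG\delta$ with a constant depending only on $\calG$. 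The main obstacle is the second stage: one must choose the conjugating permutations $\tau_e$ \emph{simultaneously} close to the $\rho(s_e)$ and compatible with the $\rho'$-images of the vertex groups and with the relations along $T$, rather than one edge at a time in isolation; handling the spanning tree $T$ cleanly (so that the tree relations hold on the nose while the vertex actions are only moved by $O_\calG(\delta)$) is where the argument needs care, and is most transparently done by first fixing, once and for all, identifications of each $\rho'_u$ with a reference action along $T$ and only then solving the edge-conjugacy problem on the non-tree edges.
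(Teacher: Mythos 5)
Your overall strategy is the same as the paper's (fix the vertex actions so their types match $\lambda'$, then redefine each $\rho'(s_e)$ to agree with $\rho(s_e)$ on a large invariant set and extend by an arbitrary intertwiner on the leftover), but the step you defer is exactly the step that needs a proof, and your parenthetical claim that it ``costs nothing'' is false as stated. For a tree edge $e$ the relation $s_e=1$ forces $\rho'(s_e)=\id$, and then $s_e^{-1}i_e(g)s_e=i_{\bar e}(g)$ forces the two edge-group actions $\rho'_{t(e)}\circ i_e$ and $\rho'_{o(e)}\circ i_{\bar e}$ to be \emph{literally equal} as homomorphisms $G_e\to\Sym(X)$, not merely conjugate. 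If you build the $\rho'_v$ independently at each vertex, these actions are only isomorphic, and ``conjugating all vertex actions into a common frame along $T$'' requires showing the conjugator at each tree edge can be chosen $O_\calG(\delta)$-close to the identity; an arbitrary intertwiner would destroy conclusion (ii). Such a conjugator does exist, but proving it uses precisely the almost-action hypothesis on the tree edges ($d_X(\rho(s_e),\id)<\delta$ and the almost-conjugation relation, so the two actions agree pointwise off a set of size $O_\calG(\delta)|X|$; pass to an invariant subset via Observation \ref{obs: existence of invariant subsets}, use cancellation of transitive types to see the complementary pieces are isomorphic, and take the conjugator to be the identity on the invariant part). Moreover, conjugating $\rho'_v$ to match its parent perturbs the edge actions towards the children of $v$, so the corrections must be organized as an induction along $T$ with the errors tracked at each step. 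This is exactly what the paper does, but it folds the type-fixing and the exact compatibility with the parent edge into a single relative statement (its Lemma \ref{lemma:fixing G wrt H}, applied at each vertex with $H=G_e$, $\phi=\rho'|_{G_u}\circ i_{\bar e}$), which is the key lemma your outline is missing.

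A second, smaller imprecision: for a non-tree edge you propose to let $\tau_e$ agree with $\rho(s_e)$ on a set where $\rho(s_e)$ intertwines the new actions only ``up to the $O(\delta)$-error''. At any point where there is an error the relation $\rho'(s_e)^{-1}\rho'(i_e(g))\rho'(s_e)=\rho'(i_{\bar e}(g))$ would fail, so you must first shrink to the subset where the intertwining of $\rho'_u\circ i_e$ and $\rho'_v\circ i_{\bar e}$ by $\rho(s_e)$ holds \emph{exactly} and which is invariant (again Observation \ref{obs: existence of invariant subsets}), and only then extend over the complement; that the two leftover pieces carry isomorphic $G_e$-actions (not just equal cardinality) again needs the cancellation-of-types argument. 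These fixes are available and bring you essentially to the paper's proof, but as written the proposal does not establish (ii).
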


We precede the proof of Proposition \ref{prop: ker implies action} with an analogous statement in the simpler context of a single group homomorphism.

\begin{lemma}
\label{lemma:fixing G wrt H}
Let $i:H\to G$ be a homomorphism of finite groups.
Let $\phi:H \to \Sym(X)$ and $\rho:G\to \Sym(X)$ be a pair of group actions. Denote $\lambda = \rho^\sharp$.
If  $\lambda'\in \Lambda_G^+$ and $\delta > 0$ are such that 
\begin{enumerate}[label=(\alph*)]
    \item $d_{X,H}(\rho\circ i, \phi)\le \delta$,
    \item $i^*(\lambda')=\phi^\sharp$, and
    \item $\|\lambda - \lambda'\|_G \le \delta \|\lambda\|_G$
\end{enumerate} 
then there exists a group action $\rho':G\to \Sym(X)$ satisfying
\begin{enumerate}[label=(\roman*)]
    \item $\rho'\circ i = \phi$,
    \item $\lambda' = (\rho')^\sharp  $, and
    \item $d_{X,G}(\rho,\rho') \prec_{G,H} \delta$.
\end{enumerate}
\end{lemma}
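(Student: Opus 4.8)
I want to "repair" $\rho$ on the finite set $X$ so that, after modifying $\rho$ on a set of relative size $O(\delta)$, the restriction to $H$ becomes exactly $\phi$ and the global isomorphism type becomes exactly $\lambda'$. The two modifications must be done in a coordinated way so that neither undoes the other. The natural order is: first fix the restriction to $H$, then fix the $G$-isomorphism type without touching $H$ again.

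\textit{Step 1: make $\rho\circ i$ agree with $\phi$ on a large subset.} Since $d_{X,H}(\rho\circ i,\phi)\le\delta$, for each $h$ in the (finite) generating set of $H$ the permutations $(\rho\circ i)(h)$ and $\phi(h)$ differ on at most $\delta|X|$ points; hence there is a subset of $X$ of size at least $(1-|H|\delta)|X|$ on which $\rho\circ i$ and $\phi$ define the same partial action. By Observation~\ref{obs: existence of invariant subsets} (applied twice, for the $H$-action via $\rho\circ i$ and the $H$-action $\phi$, or more carefully for the diagonal $H$-action on $X\times X$ through $(\rho\circ i,\phi)$ restricted to the "agreement" set), extract an $H$-invariant subset $X_0\subseteq X$, with $|X\setminus X_0|\prec_{G,H}\delta|X|$, on which $\rho\circ i$ and $\phi$ agree. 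On the complement $X\setminus X_0$ the two $H$-actions $(\rho\circ i)\restrict_{X\setminus X_0}$ and $\phi\restrict_{X\setminus X_0}$ have the same cardinality of underlying set; using $i^*$ and Observation~\ref{obs: extending actions from subgroups}-type reasoning, or just directly, I can find \emph{some} $H$-equivariant bijection between them (after possibly shrinking $X_0$ by a further $|H|$-bounded factor so that the two complements have isomorphic $H$-action types — here one throws away, on each $H$-orbit-type count, the excess). Replacing $\rho$ by a conjugate that implements this bijection on the complement produces $\rho_1:G\to\Sym(X)$ with $\rho_1\circ i=\phi$ exactly and $d_{X,G}(\rho,\rho_1)\prec_{G,H}\delta$; note $\rho_1^\sharp$ is still $\delta'$-close to $\lambda$ for $\delta'\prec_{G,H}\delta$, and $i^*(\rho_1^\sharp)=\phi^\sharp=i^*(\lambda')$.

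\textit{Step 2: correct the $G$-isomorphism type from $\rho_1^\sharp$ to $\lambda'$ while preserving $\rho\circ i=\phi$.} Both $\rho_1^\sharp$ and $\lambda'$ are vectors in $\Lambda_G^+$ of the same norm $|X|$ with the same image under $i^*$. Decompose $X$ into $G$-orbits under $\rho_1$; since $\|\rho_1^\sharp-\lambda'\|_G\prec_{G,H}\delta|X|$, there is a $G$-invariant subset $X_1\subseteq X$ with $|X\setminus X_1|\prec_{G,H}\delta|X|$ such that $(\rho_1\restrict_{X_1})^\sharp\le\lambda'$ coordinatewise; then $\lambda'-(\rho_1\restrict_{X_1})^\sharp\in\Lambda_G^+$, and applying $i^*$ shows $i^*\bigl(\lambda'-(\rho_1\restrict_{X_1})^\sharp\bigr)=\phi^\sharp-(\phi\restrict_{i\text{-image of }X_1})^\sharp=(\phi\restrict_{\text{complement}})^\sharp$, which is the $H$-type of the leftover. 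The key point: I must rebuild $\rho_1$ on $X\setminus X_1$ as a $G$-action realizing $\lambda'-(\rho_1\restrict_{X_1})^\sharp$ \emph{and restricting, via $i$, to exactly the $H$-action $\phi$ already present there} — this is possible by Observation~\ref{obs: extending actions from subgroups}, since that leftover $G$-type pulls back under $i^*$ precisely to the $H$-type that $\phi$ exhibits on $X\setminus X_1$. After also discarding a bounded-factor error so cardinalities match exactly (using $\|\lambda'\|_G=\|\lambda\|_G=|X|$), we obtain $\rho':G\to\Sym(X)$ with $(\rho')^\sharp=\lambda'$, $\rho'\circ i=\phi$, and $\rho'$ differing from $\rho_1$ only on $X\setminus X_1$, so $d_{X,G}(\rho,\rho')\le d_{X,G}(\rho,\rho_1)+d_{X,G}(\rho_1,\rho')\prec_{G,H}\delta$.

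\textit{Main obstacle.} The delicate part is Step 2's coordination: I need the rebuilding of the $G$-action on the discarded piece to be simultaneously constrained to match $\phi$ on $H$ and to realize the prescribed $G$-type — the two constraints are compatible precisely because $i^*(\lambda')=\phi^\sharp$ was assumed, and precisely because I am careful to choose $X_1$ to be $G$-invariant (not merely $H$-invariant) so that the complementary $G$-type is a genuine element of $\Lambda_G^+$. A secondary nuisance is bookkeeping the several applications of Observation~\ref{obs: existence of invariant subsets}, each costing a factor of $|G|$ or $|H|$, and the "round down to matching cardinality" step: one must verify these only ever discard an $O_{G,H}(\delta)$-fraction of $X$, which is immediate since each step starts from a set whose complement is already $O_{G,H}(\delta|X|)$ and the constants are absolute functions of $|G|,|H|$.
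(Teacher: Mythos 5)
There is a genuine gap in Step 1, and it is exactly at the point where the two constraints must be coordinated. You propose to obtain $\rho_1$ with $\rho_1\circ i=\phi$ by finding an $H$-equivariant bijection between $(\rho\circ i)\restrict_{X\setminus X_0}$ and $\phi\restrict_{X\setminus X_0}$ and then conjugating $\rho$ by it. Such a bijection need not exist: the global $H$-types are $(\rho\circ i)^\sharp=i^*(\lambda)$ and $\phi^\sharp=i^*(\lambda')$, and the hypotheses give only $\|\lambda-\lambda'\|_G\le\delta\|\lambda\|_G$, not $i^*(\lambda)=i^*(\lambda')$. Since the two actions coincide on $X_0$, the difference of the $H$-types of the two complements equals $i^*(\lambda-\lambda')$ \emph{regardless} of how $X_0$ is chosen or shrunk, so ``throwing away the excess on each orbit-type count'' cannot equalize them; and when $i^*(\lambda)\neq i^*(\lambda')$ no conjugate of $\rho$ can restrict along $i$ to $\phi$, because conjugation preserves the isomorphism type of $\rho\circ i$. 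Concretely, take $G=H=\bbZ/2$, $i=\mathrm{id}$, let $\phi(g)$ be a fixed-point-free involution of $X$ and $\rho(g)$ the involution agreeing with $\phi(g)$ except that it fixes two points, and set $\lambda'=\phi^\sharp$: hypotheses (a)--(c) hold with $\delta$ of order $1/|X|$, yet $\rho\circ i$ and $\phi$ are non-isomorphic, so the $\rho_1$ of your Step 1 does not exist (even though the conclusion of the lemma trivially holds with $\rho'=\phi$).

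To repair Step 1 you must genuinely redefine (not conjugate) the $G$-action on the discarded part so that its $i$-restriction equals $\phi$ there; but for such an extension to exist via Observation \ref{obs: extending actions from subgroups} the $H$-type of $\phi$ on that part must be $i^*$ of a nonnegative $G$-vector of the right norm, and to guarantee this you must choose the retained $G$-invariant set so that its $G$-type is bounded above, coordinate-wise, by $\lambda'$ --- i.e.\ you are forced to perform the coordination of your Step 2 simultaneously. That merged argument is precisely the paper's proof: it keeps $\rho$ on a $\rho(G)$-invariant $Y_1$ inside the agreement set whose type is the componentwise minimum $\mu_1$ of $\lambda'$ and $(\rho\restrict_{X_1})^\sharp$, and rebuilds the action on $Y_2=X\setminus Y_1$ with prescribed type $\mu_2=\lambda'-\mu_1$ and restriction $\phi\restrict_{Y_2}$, using that $i^*(\mu_2)=(\phi\restrict_{Y_2})^\sharp$. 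Your Step 2 is essentially this argument and is fine \emph{conditional} on Step 1 (also, the ``round down to matching cardinality'' there is unnecessary, since $\|\lambda'\|_G=|X|$ forces the norms to match automatically); but as written, the two-stage plan breaks at its first stage, so the proof does not go through.
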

Note that the \enquote{small} action $\phi$ of the group $H$ is not being changed, rather the \enquote{large} action $\rho$ is being replaced with a new action $\rho'$ compatible with $\phi$.

\begin{proof}[Proof of Lemma \ref{lemma:fixing G wrt H}]
We combine  Assumption (a) and Observation \ref{obs: existence of invariant subsets} applied with respect to the finite group $H$ in order to find a $\phi(H)$-invariant subset $X_0 \subset X$ satisfying $\phi \restrict _{X_0} = (\rho \circ i) \restrict _{X_0}$ and $|X_0| \ge (1-\delta|H|)  |X|$. By applying Observation \ref{obs: existence of invariant subsets} a second time with respect to the finite group $G$, we find a $\rho(G)$-invariant subset $X_1 \subset X_0$ satisfying $|X_1| \ge (1-\delta|H||G|) |X|$. 

Consider the vector $\lambda _1 = (\rho \restrict _{X_1})^\sharp \in \Lambda_G^+$. Let  $\mu_1  \in \Lambda_G^+$ be the component-wise minimum of the two vectors $\lambda' $ and $\lambda _1$, i.e $\mu_1$ is the vector given by
$$ (\mu_1)_\chi = \min \{(\lambda') _\chi, (\lambda _1)_\chi \} \quad \forall \chi \in \Trans(G).$$
The previous paragraph implies that $\|\lambda - \lambda _1\|_G \le \delta |H||G| |X|$. Therefore Assumption (c)   gives
\begin{align*}   
 \max\{ \|\lambda' -\mu_1\|_G, \|\lambda _1-\mu_1\|_G\}  &\le \|\lambda' -\lambda _1\|_G  
 \le \|\lambda' -\lambda \|_G + \|\lambda - \lambda _1\|_G \\ 
 &\le \delta |X| + \delta|H||G| |X| = \delta (1+|H||G|)|X|.
\end{align*}

Let $Y_1 \subset X_1$ be any $\rho(G)$-invariant subset satisfying $\mu_1 = (\rho\restrict _{Y_1})^\sharp $. 
Write $\mu_2=\lambda'  - \mu_1 \in \Lambda_G^+$ and $Y_2 = X \setminus Y_1$ so that  $\lambda'  = \mu_1 + \mu_2$ and $X = Y_1 \coprod Y_2$. It will not be the case in general that   $(\rho\restrict Y_2)^\sharp$ coincides with $\mu_2$. However   $|Y_2| = \|\mu_2\|_G$ and  in particular the size of the subset $Y_2$ is bounded by
$$|Y_2| = \|\mu_2\|_G \le \delta (1+|H||G|) |X|.$$

We   define a new action $\rho' : G \to \Sym(X)$ as follows. 
To begin with, the restriction of $\rho'$ to the $\rho(G)$-invariant subset $Y_1$ coincides with $\rho$, namely   $\rho' \restrict _{Y_1} = \rho \restrict _{Y_1}$. As $i^*(\lambda')=\phi^\sharp$ by Assumption (b) and as $i^*(\mu_1) =(\phi\restrict_{Y_1})^\sharp$ by the choice of the subset $Y_1$ we have $i^*(\mu_2) = (\phi\restrict_{Y_2})^\sharp$. It remains to define the  action $\rho'$ on the $\rho(G)$-invariant complement $Y_2 = X \setminus Y_1$.
Taking into account  Observation \ref{obs: extending actions from subgroups} we  let  $\rho' \restrict _{Y_2}$ be an arbitrary action satisfying  $(\rho' \circ \iota) \restrict _{Y_2} = \phi \restrict _{Y_2}$ and $(\rho'\restrict_{Y_2})^\sharp = \mu_2$. This completes the definition of the new action $\rho'$. 

Statements (i) and (ii) of the Lemma hold true since  $\rho' \circ i = \phi$ and $$\rho'^\sharp = (\rho'\restrict_{Y_1})^\sharp + (\rho'\restrict_{Y_2})^\sharp = \mu_1 + \mu_2 = \lambda'. $$ 
To verify Statement (iii)   we compute
\begin{align*}
\begin{split}
d_X(\rho(g),\rho'(g)) &= \frac{|Y_1|}{|X|}d_{Y_1} (\rho (g) \restrict _{Y_1},\rho' (g)\restrict_{ Y_1})  + \frac{|Y_2|}{|X|} d_{Y_2} (\rho(g)\restrict _{Y_2},\rho'(g)\restrict_{ Y_2} ) \\& \le \frac{|Y_2|}{|X|} \le (1+|H||G|)\delta \le 2 |H||G| \delta
\end{split}
\end{align*}
for all elements $g\in G$. Therefore  $d_{X,G}(\rho,\rho') \le 2 |H||G|^2 \delta$ as required.
\end{proof}

We are now in a position to prove the main result of \S\ref{sec: from linear algebra to actions}.

\begin{proof}[Proof of Proposition \ref{prop: ker implies action}]
We define the new action $\rho' : \piG \to \Sym(X) $ of the fundamental group $\piG$ by specifying it on the finite generating set $S_\calG$. This is done in two steps: first we define $\rho'$ on the vertex groups $G_v$ and then on the generators of the form $s_e$.

\subsubsection*{Step 1. Defining $\rho'$ on $G_v$ for all $v\in V(\Gamma)$.}
Fix an arbitrary base  vertex $v_0$ in $ V(\Gamma)$. We define the vertex group actions $\rho'|_{G_v}$ by induction on the distance in the spanning tree $T$ of the vertex $v$ from the base vertex $v_0$  such that:
\begin{enumerate}
    \item $(\rho'|_{G_v})^\sharp = \lambda'_v$,
    \item $d_{X,G_v}(\rho|_{G_v}, \rho'|_{G_v}) \prec \delta$ and 
    \item $\rho|_{G_{t(e)}}\circ i_e = \rho|_{G_{o(e)}}\circ i_{\bar{e}}$ for the unique edge $e\in E(T)$ such that $t(e)=v$ and the unique path in the tree $T$ from $v_0$ to $v$  passes through $o(e)$.
\end{enumerate}


\emph{Base of the induction.} We apply Lemma \ref{lemma:fixing G wrt H} with the following data: the group $G$ is the vertex group $G_{v_0}$, the group $H$ and the homomorphism $ i : H \to G$ are trivial and the vector $\lambda' \in \Lambda_G^+$ is the coordinate $\lambda'_{v_0} \in \Lambda_{G_{v_0}}^+$.
This results in a new  action $\rho'|_{G_{v_0}}$ of the base vertex group $G_{v_0}$ satisfying 
 $d_{X,G_{v_0}}(\rho|_{G_{v_0}},\rho'|_{G_{v_0}}) \prec \delta$
and $(\rho'|_{G_{v_0}})^\sharp = \lambda'_{v_0}$.

\vspace{5pt}

\emph{Induction step.} Let $v\in V(\Gamma)$ be a vertex of distance $n \in \bbN$ from the base vertex $v_0$ in the tree $T$ and $e\in E(T)$ be the unique edge such that $t(e)=v$ and $o(e)$ is distance $n-1$ from the base vertex $v_0$. Denote $u = o(e)$. 
By the induction hypothesis the vertex group action $\rho'|_{G_u}$ has been defined and satisfies $(\rho'|_{G_u})^\sharp = \lambda'_u$. 

We  apply Lemma \ref{lemma:fixing G wrt H} with the following data: the group $G$ is the vertex group $G_{v}$, the group $H$ is the edge group $G_e$, the homomorphism $ i : H \to G$ is the map $i_e$, the action $\phi$ of the group $H$ is $(\rho'|_{G_u})\circ i_{\bar{e}}$, the action  $\rho$ of the group $G$  is $\rho|_{G_v}$ and lastly the vector $\lambda' \in \Lambda_G^+$ is the coordinate $\lambda'_v \in \Lambda_{G_v}^+$. 

We proceed to verify the assumptions of Lemma \ref{lemma:fixing G wrt H}. 
The induction hypothesis combined with the assumption that  $ \lambda' \in \ker \DG$     imply 
$$i^*(\lambda'_v) = i_e^*(\lambda'_v) = i^*_{\bar{e}}(\lambda'_u) = i^*_{\bar{e}}((\rho'|_{G_u})^\sharp) = (\rho' \circ i_{\bar{e}})^\sharp = \phi ^\sharp. $$
By the triangle inequality the two actions $\rho \circ i$ and $\phi$ of the edge group $G_e$ satisfy
\begin{align*}
 \begin{split}
     d_{X,G_e}( \rho \circ i, \phi) & = d_{X,G_e}( \rho \circ i_e, \rho' \circ i_{\bar{e}}) \\
    &\le d_{X,G_e}( \rho \circ i_e, \rho(s_e) \cdot (\rho \circ i_e))  
    \\&\quad+d_{X,G_e}( \rho(s_e) \cdot (\rho   \circ i_e), 
    \rho(s_e) \cdot (\rho   \circ i_e) \cdot \rho(s_e)\ii) 
    \\&\quad+ d_{X,G_e}(\rho(s_e) \cdot (\rho   \circ i_e) \cdot \rho(s_e)\ii,\rho \circ i_{\bar{e}}) 
     \\&\quad+ d_{X,G_e}( \rho \circ i_{\bar{e}}, \rho' \circ i_{\bar{e}}).
\end{split}
 \end{align*}
The normalized Hamming metric $d_X$ is bi-invariant so  that the first and second summands are both less than $d_{X,G_e}(\rho(s_e), \id) < \delta$.
The third summand  is also less than $\delta$ as $\rho$ is a $\delta$-almost $\piG$-action and taking into account  the corresponding relation in $R_\calG$. Lastly, the fourth summand satisfies 
$d_{X,G_e}(\rho \circ i_{\bar{e}}, \rho' \circ i_{\bar{e}}) \prec \delta$
by the induction hypothesis. We conclude that
\begin{align*}
d_{X,G_e}( \rho \circ i, \phi) \prec \delta.
\end{align*}

Having verified all of the assumptions for Lemma \ref{lemma:fixing G wrt H}, we get a new action $\rho'|_{G_v}$ of the vertex group $G_v$ such that $\rho'\circ i_e = \rho' \circ i_{\bar{e}}$ on the edge group $G_e$, 
$d_{X,G_v}(\rho|_{G_v},\rho'|_{G_v})\prec \delta$ 
and $(\rho'|_{G_v})^\sharp = \lambda'_v$. This completes the step of the induction.

\vspace{5pt}

\emph{Proceed with the induction until the new action $\rho'$ is defined on all vertex groups.}

\subsubsection*{Step 2. Defining $\rho'$ on the generators $s_e$ for all $e \in \vE(\Gamma)$}
Let $e \in \vE(\Gamma)$ be a directed edge with $o(e) = u$ and $t(e) = v$. 

\vspace{5pt}
\emph{Assume that $e \in E(T)$.} Define   $\rho'(s_e) = \id$. Recall that the action $\rho'$ of the edge group $G_e$ satisfies  $\rho'\circ i_e = \rho' \circ i_{\bar{e}}$ by Step 1. Therefore
\begin{equation*}
\rho'( i_e(g_e)s_e) (x) = \rho'( s_e i_{\bar{e}}(g_e) )(x)
\end{equation*}
for all points $x\in X$ and all elements $g_e\in G_e$. Moreover, since $\rho$ is a $\delta$-almost $\piG$-action we have  $d_X(\rho(s_e),\rho'(s_e)) \le \delta$.

\vspace{5pt}
\emph{Assume that $e \in E(\Gamma)\setminus E(T)$.}
According to  Observation \ref{obs: existence of invariant subsets} there exists a $\rho'|_{i_e(G_e)}$-invariant subset $X_e \subseteq X$, such that 
$|X-X_e| \prec \delta |X|$ 
and the following conditions are satisfied for all points $x\in X_e$ and all elements $g_e \in G_e$ 
\begin{align*}
\begin{split}
    \rho( i_e(g_e)s_e) (x) &= \rho( s_e i_{\bar{e}}(g_e)  )(x),\\
    \rho(i_{\bar{e}}(g_e))(x)&=\rho'(i_{\bar{e}}(g_e))(x),\\
    \rho(i_{e}(g_e))(\rho(s_e) x) &= \rho'(i_{e}(g_e))(\rho(s_e) x).
    \end{split}
\end{align*} 
Define the restriction $\rho'(s_e) \restrict|_{X_e}$ of the new action to be the same as $\rho(s_e) \restrict _{X_e}$. The above conditions imply that the permutation   $\rho'(s_e)$ satisfies
\begin{equation*}
\rho'(i_e(g_e)s_e ) (x) = \rho'( s_e i_{\bar{e}}(g_e) )(x)
\end{equation*}
for all points $x\in X_e$ and all edge group elements  $g_e \in G_e$.

It remains to define the permutation $\rho'(s_e)$ on the complement $X-X_e$  and verify the above relation for all points  $x\in X-X_e$. The two actions $\rho' \circ i_e$ and $\rho' \circ i_{\bar{e}}$ of the edge group $G$ are conjugate as   $\lambda'\in \ker \DG$ and $\rho'^\sharp = \lambda'$.
 Since the permutation $\rho(s_e)$ conjugates $(\rho' \circ i_e) \restrict _{X_e}$ to $(\rho' \circ i_{\bar{e}}) \restrict _{\rho(s_e) X_e}$, we know that their complements must be conjugate as well. Define the restriction $\rho'(s_e) \restrict_{X \setminus X_e}$ to be an arbitrary bijection from $X \setminus X_e$ to $X \setminus \rho(s_e)X_e$ implementing this isomorphism of actions. Note that 
$d_X(\rho(s_e), \rho'(s_e)) \prec \delta$.
This concludes the definition of the permutation $\rho'(s_e)$ for this particular oriented edge $e$.

\subsubsection*{A bound on $d_{X,S_\calG}(\rho,\rho')$.} The $\piG$-action $\rho'$ has been constructed in   Steps 1 and 2. It was specified in terms of the finite generating set $S_\calG$ while making sure that the defining relations $R_\calG$ of the fundamental group $\piG$ hold true. It follows from the construction that   $\rho'^\sharp = \lambda'$. 
To conclude the proof it remains to bound the normalized Hamming distance $d_{X,S_\calG}(\rho,\rho')$.  Namely
\begin{align*}
\begin{split}
d_{X,S_\calG}(\rho,\rho') &= \sum_{\sigma \in S_\calG} d_X(\rho(\sigma), \rho'(\sigma))  \\
&=   \sum_{v \in V(G)} \sum_{g \in G_v}   d_X(\rho(g), \rho'(g))   \\
&\quad\quad+   \sum_{e \in E(T)}  d_X(\rho(s_e), \rho'(s_e)) \\
&\quad\quad+     \sum_{e \in E(\Gamma) \setminus E(T)}  d_X(\rho(s_e), \rho'(s_e))  \\
&\prec \delta
\end{split}
\end{align*}
as required.
\end{proof}


\section{Proof of the main theorem}

We are ready to show that the epimorphism $\barpiG \to \piG$ is P-stable.

\begin{proof}[Proof of Theorem \ref{thm:quotient map is stable}]
Let $X$ be a finite set admitting a $\delta $-almost $\piG$-action $\rho:\overline{\pi}_1(\calG,T) \to \Sym(X)$. Denote $\lambda = \rho^\sharp$.  We know by   Proposition \ref{prop:a delta almost action gives almost kernel} that
\begin{equation}
\|\DG (\lambda)\|_E \prec_{\calG} \delta \| \lambda \|_V
\end{equation}
Lemma \ref{lem:linear correction to a smaller vector} allows us to find a vector $\lambda'' \in \Lambda_V^+ \cap \ker \DG$ such that 
\begin{equation*}
\|\lambda'' - \lambda\|_V \prec \delta \|\lambda\|_V \quad \text{and} \quad \|\lambda''\|_V \le \|\lambda\|_V.
\end{equation*}

We will make an auxiliary use of the action of the fundamental group $\pi_1(\calG,T)$ on a singleton. Denote this action by $s$. By Proposition \ref{prop:honest action is in kernel} we know that $s^\sharp \in \ker \DG$. Moreover $\|s^\sharp\|_V = 1$. 
Let 
$$ \lambda'  = \lambda'' + (\|\lambda\|_V - \|\lambda''\|_V) s^\sharp.$$
It is clear that  $\lambda'  \in \ker \DG$, $\|\lambda' \|_V = \|\lambda\|_V = \|\rho^\sharp\|_V = |X|$ and
$$ \|\lambda'  - \lambda\|_V \le \|\lambda - \lambda''\|_V + \|\lambda' - \lambda'' \|_V  \prec \delta |X|.$$ 

To conclude the proof we apply Proposition \ref{prop: ker implies action} and obtain the desired action $\rho' : \piG  \to \Sym(X)$ satisfying $(\rho')^\sharp = \lambda' $ and
 $d_{X,S_\calG}(\rho, \rho') \prec \delta$. 
\end{proof}

\begin{remark}
It follows from the proof that one can take $\delta \prec \epsilon$ for the P-stability of $\piG$.
\end{remark}

The derivation of Theorem \ref{thm: main thm} from the above Theorem \ref{thm:quotient map is stable} is immediate and has been discussed in \S\ref{subsec: outline of proof}.

\section{Graph automorphisms of finite order}
\label{sec:graph automorphisms}

Fix some $d \in \bbN$ and let $F_d=F(s_1,\ldots,s_d)$ be the free group of rank $d$.

A finite Schreier graph  $A$ of the group $F_d$ is a finite directed graph edge-labelled by the generators $s_1,\ldots,s_d$ such that every vertex has exactly one incoming and one outgoing edge of each label. We indicate the labelling of the directed edges $E(A)$ using a function  $c=c_A:E(A) \to \{s_1,\ldots,s_d\}$.

A \emph{weak $\delta$-almost-automorphism} $\alpha$ of the Schreier graph $A$ is a pair of bijections $\alpha: V(A)\to V(A)$ and   $\alpha:E(A)\to E(A)$ (we use the same letter for both by abuse of notation) such that for all directed edges $e\in \vE(A)$ except for a subset of size $\delta|E(A)|$ we have
 $$c(\alpha(e)) = c(e), \; o(\alpha(e))=\alpha(o(e)) \quad \text{and} \quad t(\alpha(e)) = \alpha(t(e)).$$
 
 A \emph{$\delta$-almost-automorphism} $\alpha$ is a weak $\delta$-almost-automorphism that moreover satisfies the first two conditions, namely $c(\alpha(e)) = c(e)$ and $o(\alpha(e))=\alpha(o(e))$, for all directed edges $e\in E(A)$.

\begin{observation}
Let $\alpha$ be a weak $\delta$-almost-automorphism of the finite Schreier graph $A$. Up to changing $\alpha$ on at most $O(\delta|E(A)|)$ edges we can make $\alpha$ into a $\delta$-almost-automorphism.
\end{observation}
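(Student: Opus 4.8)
The plan is to keep the vertex bijection $\alpha\colon V(A)\to V(A)$ fixed and to overwrite the edge bijection by the one that the vertex map \emph{forces}. For a vertex $v$ and a label $s_i$, let $e^i_v$ denote the unique directed edge of $A$ with origin $v$ and label $s_i$; it exists and is unique precisely because $A$ is a Schreier graph of $F_d$. I would then define $\alpha'\colon E(A)\to E(A)$ by $\alpha'(e^i_v)=e^i_{\alpha(v)}$, keeping the same vertex map $\alpha$. By construction $c(\alpha'(e))=c(e)$ and $o(\alpha'(e))=\alpha(o(e))$ for \emph{every} directed edge $e$, so the first two conditions in the definition of a $\delta$-almost-automorphism hold with no exceptions at all.

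Two things then need checking. First, that $\alpha'$ is a bijection of $E(A)$: for each fixed label $s_i$ the assignment $v\mapsto e^i_v$ identifies $V(A)$ with the set of $s_i$-labelled edges, under which $\alpha'$ becomes the vertex bijection $\alpha$, hence is a bijection of the $s_i$-edges; ranging over the finitely many labels gives a bijection of all of $E(A)$. Second, that $\alpha'$ differs from $\alpha$ on few edges and is still a $\delta$-almost-automorphism: let $B\subseteq E(A)$ be the exceptional set of size at most $\delta|E(A)|$ coming from the definition of weak $\delta$-almost-automorphism. For $e=e^i_v\notin B$ one has $c(\alpha(e))=s_i$ and $o(\alpha(e))=\alpha(v)$, and by the uniqueness of $e^i_{\alpha(v)}$ this forces $\alpha(e)=e^i_{\alpha(v)}=\alpha'(e)$; thus $\alpha$ and $\alpha'$ agree off $B$. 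In particular they differ on at most $|B|\le\delta|E(A)|$ edges, and for $e\notin B$ the third condition $t(\alpha(e))=\alpha(t(e))$ together with $\alpha'(e)=\alpha(e)$ gives $t(\alpha'(e))=\alpha(t(e))$, so $\alpha'$ fails the third condition only on $B$. Hence $\alpha'$, with the unchanged vertex map, is a $\delta$-almost-automorphism obtained from $\alpha$ by altering at most $\delta|E(A)|$ edges — even sharper than the claimed $O(\delta|E(A)|)$.

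There is no serious obstacle here: the argument is bookkeeping, and the only point requiring a moment of care is that $\alpha'$ is a \emph{global} bijection of $E(A)$ rather than merely a bijection after deleting $B$. If one prefers to work in Serre's convention with opposite edges $\bar e$, it suffices to additionally set $\alpha'(\bar e)=\overline{\alpha'(e)}$ and repeat the same computation, the exceptional set remaining of size $O(\delta|E(A)|)$.
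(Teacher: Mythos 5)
Your argument is correct: overwriting the edge map by the one induced by the vertex map (sending the $s_i$-edge at $v$ to the $s_i$-edge at $\alpha(v)$) enforces the label and origin conditions everywhere, and since off the exceptional set $B$ the original $\alpha$ already agrees with this induced map, you change at most $\delta|E(A)|$ edges and the terminus condition still fails only on $B$. The paper states this observation without proof, and your bookkeeping is exactly the intended argument, including the necessary check that the new edge map is a global bijection.
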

Fix some integer $n \in \bbN$.
\begin{definition}
A (weak) $\delta$-almost-automorphism $\alpha$ has \emph{$\delta$-almost order $n$} if the condition $\alpha^n(v)=v$ holds true for all $v\in V(A)$ except for a subset of size $\delta |V(A)|$.
\end{definition}

Given an action $\rho:F_d *\gen{a}\to \Sym(X)$ on some finite set $X$ we denote by $A_\rho$ the Schreier graph of the restricted action $\rho:F_d \to\Sym(X)$. Let $\alpha_\rho$ denote the bijection on the vertices of the Schreier graph $A_\rho$ defined by $\alpha_\rho=\rho(a)$. Moreover, by abuse of notation, let $\alpha_\rho$ denote the bijection of the directed edges of $A_\rho$ defined for every $e \in E(A)$ by   $\alpha_\rho(e) = e'$ where $e'$ is the unique edge satisfying $c(e)=c(e')$ and $o(e') = o(\alpha_\rho(e))$.


\begin{observation}
    If $\rho:F_d *\gen{a}\to \Sym(X)$ is a $\delta$-almost $(F_d \times \bbZ)$-action (resp. $\delta$-almost $(F_d \times (\bbZ/n\bbZ))$-action) on some finite set $X$ then $\alpha_\rho$ is a $\delta$-almost-automorphism of the Schreier graph $A_\rho$ (resp. of $\delta$-almost-order $n$). 
    
    Vice versa, if $A$ is a finite Schreier graph of the group $F_d$ and $\alpha$ is $\delta$-almost-automorphism  (resp. of $\delta$-almost-order $n$) of the free group $F_d$ then there exists a $\delta$-almost $(F_d \times \bbZ)$-action (resp. $\delta$-almost $(F_d \times (\bbZ/n\bbZ))$-action) $\rho:F_d *\gen{a}\to \Sym(X)$ such that $A=A_\rho$ and $\alpha = \alpha_\rho$.
\end{observation}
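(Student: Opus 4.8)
The plan is to set up an exact dictionary between actions $\rho : F_d * \gen{a} \to \Sym(X)$ and pairs consisting of a Schreier graph on $X$ together with a permutation of $X$, and then to translate the defining relations of $F_d \times \bbZ$ (resp.\ $F_d \times (\bbZ/n\bbZ)$) into edge-counting statements about the associated almost-automorphism.

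First recall that a finite Schreier graph $A$ of $F_d$ with $V(A) = X$ is literally the same datum as an action $F_d \to \Sym(X)$: the outgoing $s_i$-edge at a vertex $x$ points at $\rho(s_i)x$, and this rule recovers $A$ from the $F_d$-action. Since $F_d * \gen{a}$ is a free product, an action $\rho : F_d * \gen{a} \to \Sym(X)$ is the same thing as an $F_d$-action on $X$ --- i.e.\ a Schreier graph $A$ on $X$ --- together with an arbitrary permutation $\rho(a) \in \Sym(X)$; under this correspondence $A_\rho = A$ and the vertex map of $\alpha_\rho$ is $\rho(a)$. Moreover, because a (non-weak) $\delta$-almost-automorphism $\alpha$ is required to satisfy $c(\alpha(e)) = c(e)$ and $o(\alpha(e)) = \alpha(o(e))$ for \emph{all} directed edges, its edge map is forced by its vertex map and agrees with the edge map induced by $\rho(a) = \alpha$ in exactly the way $\alpha_\rho$ is defined. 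Thus, for the ``vice versa'' direction I would simply let $\rho|_{F_d}$ be the $F_d$-action encoded by $A$ and $\rho(a)$ be the vertex bijection of $\alpha$; then $A_\rho = A$ and $\alpha_\rho = \alpha$ by the above, and it only remains to estimate the relevant relators.

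The quantitative core is a short computation. Fix $i$ and let $e$ be the outgoing $s_i$-edge at a vertex $x$, so that $o(e) = x$, $t(e) = \rho(s_i)x$, $o(\alpha_\rho(e)) = \rho(a)x$ and $t(\alpha_\rho(e)) = \rho(s_i)\rho(a)x$, whereas $\alpha_\rho(t(e)) = \rho(a)\rho(s_i)x$. Writing $z = \rho(a)\rho(s_i)x$, which runs over $X$ bijectively as $x$ does, and setting $w_i = s_i a s_i\ii a\ii$ (the $i$-th commutator relator of $F_d \times \bbZ$), one checks $\rho(w_i)z = \rho(s_i)\rho(a)x$, so $\rho(w_i)z = z$ holds exactly when $t(\alpha_\rho(e)) = \alpha_\rho(t(e))$. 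Since the $s_i$-edges are in bijection with $X$ and $\vE(A_\rho)$ is their disjoint union over $i$, summing gives
\[
 \#\{\, e \in \vE(A_\rho) \: : \: t(\alpha_\rho(e)) \neq \alpha_\rho(t(e)) \,\} \;=\; |X| \sum_{i=1}^{d} d_X(\rho(w_i), \id),
\]
and, because the vertex map of $\alpha_\rho$ is $\rho(a)$, likewise $\#\{\, v \in V(A_\rho) : \alpha_\rho^n(v) \neq v \,\} = |X|\, d_X(\rho(a^n), \id)$.

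Both directions then read off from these two identities: $\rho$ is a $\delta$-almost $(F_d \times \bbZ)$-action exactly when $\sum_i d_X(\rho(w_i),\id)$ is small, which by the first identity is equivalent to $\alpha_\rho$ having few directed edges violating $t(\alpha_\rho(e)) = \alpha_\rho(t(e))$, i.e.\ being a $\delta$-almost-automorphism; adding the relator $a^n$ and invoking the second identity produces the order-$n$ statements. I do not anticipate a real obstacle: the only points requiring care are (a) the step, in the converse direction, that uses the \emph{strong} (not weak) almost-automorphism hypothesis to guarantee that the edge map of $\alpha$ is the one induced by its vertex bijection, so that $\alpha_\rho = \alpha$ genuinely holds, and (b) keeping track of the normalisation constants --- the number $d$ of commutator relators and the vertex/edge counts of $A_\rho$ --- so that a ``$\delta$'' on one side becomes an ``$O(\delta)$'' on the other, in line with the asymptotic conventions used throughout this section.
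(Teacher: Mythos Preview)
Your argument is correct and carefully written. The paper itself states this result as an \emph{Observation} and gives no proof; it is treated as an immediate unpacking of the definitions. Your dictionary between $F_d*\gen{a}$-actions and pairs (Schreier graph, vertex permutation), together with the edge-by-edge identity
\[
\#\{\,e\in \vE(A_\rho): t(\alpha_\rho(e))\neq \alpha_\rho(t(e))\,\}\;=\;|X|\sum_{i=1}^{d} d_X(\rho([s_i,a]),\id),
\]
is exactly the computation one would do to make the observation precise, and your remark (a) correctly isolates why the \emph{non-weak} hypothesis on $\alpha$ is what forces $\alpha_\rho=\alpha$ in the converse direction. Your remark (b) is also on point: in the backward direction a $\delta$-almost-automorphism yields only a $d\delta$-almost $(F_d\times\bbZ)$-action, so the literal ``$\delta$'' in the paper's phrasing should be read as ``$O_\calG(\delta)$'' in keeping with the asymptotic conventions used elsewhere; this is harmless for the intended application (Corollary~\ref{cor:almost period automorphisms}).
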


 Theorem \ref{thm: main thm} applied to the virtually free group $F_d\times (\bbZ/n\bbZ)$ and combined with the above observations immediately gives the following corollary.

\begin{corollary}
\label{cor:almost period automorphisms}
    Let $A$ be a finite Schreier graph of the free group $F_d$ and let $\alpha$ be a weak $\delta$-almost automorphism of $\delta$-almost order $n$. Then there exists a Schreier graph $A'$ of the group $F_d$ with $V(A) = V(A')$, and an automorphism $\alpha'$ of $A'$ of order $n$ such that the graphs $A$ and $A'$ differ on at most $O(\delta|E|)$ edges, and the automorphisms $\alpha$ and $\alpha'$ differ on at most $O(\delta|V|)$ vertices.\qed
\end{corollary}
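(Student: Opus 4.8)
The plan is to derive Corollary \ref{cor:almost period automorphisms} from Theorem \ref{thm: main thm} applied to the virtually free group $F_d \times (\bbZ/n\bbZ)$, using the dictionary between almost-automorphisms and almost-actions recorded in the two Observations immediately preceding the statement. First I would translate the hypothesis: given the finite Schreier graph $A$ of $F_d$ and the weak $\delta$-almost-automorphism $\alpha$ of $\delta$-almost order $n$, use the first Observation to replace $\alpha$ by a genuine $\delta$-almost-automorphism after changing it on $O(\delta |E(A)|)$ edges (which also changes it on $O(\delta |V(A)|)$ vertices, since each edge is incident to two vertices and there are $|E(A)| = d\,|V(A)|$ edges). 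Then invoke the second Observation to produce a set $X = V(A)$ and an action $\rho : F_d * \gen{a} \to \Sym(X)$ which is a $\delta'$-almost $(F_d \times (\bbZ/n\bbZ))$-action for some $\delta' = O(\delta)$, with $A = A_\rho$ and (the corrected) $\alpha = \alpha_\rho$. Here I would need to be slightly careful that the normal generating set $R$ implicit in the definition of \enquote{$\delta$-almost $(F_d\times(\bbZ/n\bbZ))$-action} — the commutators $[s_i, a]$ together with $a^n$ — matches up with the edge-count and vertex-count error tolerances in the definition of almost-automorphism, but this is exactly the content of the second Observation and amounts only to tracking constants.

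Next I would apply Theorem \ref{thm: main thm}: since $F_d \times (\bbZ/n\bbZ)$ is finitely generated virtually free, it is P-stable, so for the given $\varepsilon$ (which I will choose at the end to make the error bounds come out as $O(\delta)$) there is a genuine action $\rho' : F_d \times (\bbZ/n\bbZ) \to \Sym(X)$ with $d_{X, S}(\rho, \rho' \circ \phi) < \varepsilon$, where $\phi$ is the quotient map and $S$ is the generating set $\{s_1,\dots,s_d,a\}$. By the remark following the proof of Theorem \ref{thm:quotient map is stable} one can in fact take $\varepsilon \prec \delta$, i.e. the honest action is produced at Hamming distance $O(\delta)$ from $\rho$ on each generator. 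Now set $A' = A_{\rho'}$, the Schreier graph of the restriction $\rho'|_{F_d}$, and $\alpha' = \alpha_{\rho'} = \rho'(a)$; since $\rho'$ factors through $F_d \times (\bbZ/n\bbZ)$ the bijection $\alpha'$ is a genuine automorphism of $A'$, commuting with all the label-permutations, and $(\alpha')^n = \rho'(a)^n = \mathrm{id}$ so $\alpha'$ has order dividing $n$ (if an order-exactly-$n$ automorphism is wanted one can absorb this by a standard trick, e.g.\ enlarging $X$ by a single orbit of size $n$ on which $a$ acts as an $n$-cycle and $F_d$ trivially, at negligible cost $O(1/|X|)$; alternatively the statement should be read as \enquote{order dividing $n$}, as is natural here).

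The remaining work is purely bookkeeping of the closeness estimates. The bound $d_X(\rho(s_i), \rho'(s_i)) \prec \delta$ for each $i$ says that the permutation of $V(A)$ implementing the label-$s_i$ edges is changed on at most $O(\delta |V(A)|)$ vertices, hence the edge sets of $A$ and $A'$ differ in at most $\sum_{i=1}^d O(\delta|V(A)|) = O(d\,\delta|V(A)|) = O(\delta |E(A)|)$ directed edges; folding in the $O(\delta|E(A)|)$ edges changed in the initial correction step keeps this $O(\delta|E|)$. Likewise $d_X(\rho(a), \rho'(a)) \prec \delta$ gives $|\{v : \alpha(v) \neq \alpha'(v)\}| = O(\delta|V(A)|)$, again after adding the $O(\delta|V(A)|)$ vertices changed when passing from the weak almost-automorphism to the almost-automorphism and when producing $\rho$ from $\alpha$. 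All constants depend only on $d$ and $n$, which are fixed, so they are legitimately absorbed into the $O(\cdot)$ notation.

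I do not expect a genuine obstacle here: the corollary is a translation exercise, and the only thing requiring a little care is the consistent propagation of the $O(\delta)$ error through the three translation steps (correcting the weak almost-automorphism, encoding it as an almost-action, decoding the corrected action back to a graph automorphism), together with the passage between \enquote{fraction of vertices} and \enquote{fraction of edges} via the identity $|E(A)| = d\,|V(A)|$. The potential subtlety of whether $\alpha'$ has order exactly $n$ versus dividing $n$ is minor and handled by the single-orbit padding trick mentioned above, at a cost that vanishes in the $O(\delta)$ bound since we may assume $|V(A)|$ is large (for bounded $|V(A)|$ the statement is vacuous once $\delta$ is small enough to force $\alpha$ to be an honest automorphism already).
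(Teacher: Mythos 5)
Your proposal is correct and follows exactly the paper's route: the paper derives the corollary immediately from Theorem \ref{thm: main thm} applied to $F_d\times(\bbZ/n\bbZ)$ together with the two observations translating between almost-automorphisms of Schreier graphs and almost-actions of $F_d*\gen{a}$, which is precisely your argument with the constant-tracking spelled out. Your added care about order exactly $n$ versus order dividing $n$ is a reasonable touch that the paper itself glosses over.
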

    
Note that Corollary \ref{cor:almost period automorphisms} is false without requiring that $\alpha$ has $\delta$-almost order $n$ since $F_d\times \bbZ$ is non P-stable by \cite{ioana2020stability}.

We end this paper with the following related question. 

\begin{question}
Is the conclusion of Corollary \ref{cor:almost period automorphisms} true in the setting of general $d$-regular graphs and graph automorphisms (rather than Schreier graphs of $F_d$)?
\end{question}

\bibliographystyle{plain}
\bibliography{biblio}
\end{document}